\providecommand{\keywords}[1]
{
  \small	
  \textbf{\textit{Keywords---}} #1
}
\newtheorem{thm}{Theorem}[section]
\newtheorem{conj}[thm]{Conjecture}
\newtheorem{prop}[thm]{Proposition}
\newtheorem{claim}{Claim}
\newtheorem{lemma}[thm]{Lemma}
\newtheorem{cor}[thm]{Corollary}
\theoremstyle{definition}
\newtheorem{defi}[thm]{Definition}
\def\F{\mathcal{F}}
\def\G{\mathcal{G}}
\def\HH{\mathcal{H}}
\def\A{\mathcal{A}}
\def\B{\mathcal{B}}
\def\I{\mathcal{I}}
\def\L{\mathcal{L}}
\def\qq{q}
\newcounter{casenum}
\newcommand*{\rom}[1]{\expandafter{\romannumeral #1\relax}}
\title{Intersecting families of sets are typically trivial}
\author{ J\'ozsef Balogh\thanks{Department of Mathematical Sciences, University of Illinois at Urbana-Champaign, IL, USA. Email: \texttt{jobal@illinois.edu}.
Research supported by NSF RTG Grant DMS-1937241, NSF Grant DMS-1764123 and Arnold O. Beckman Research Award (UIUC) Campus Research Board 18132, the Langan Scholar Fund (UIUC)  and the Simons Fellowship.}
\quad Ramon I. Garcia\thanks{Department of Mathematics, University of Illinois at Urbana-Champaign, Urbana, IL, USA. Email: \texttt{rig2@illinois.edu}.}
\quad Lina Li\thanks{Department of Combinatorics \& Optimization, University of Waterloo, Waterloo, Canada. Email: \texttt{lina.li@uwaterloo.ca}.}
\quad Adam Zsolt Wagner\thanks{Department of Mathematical Sciences, Worcester Polytechnic Institute, Worcester, USA. Email: \texttt{zadam@wpi.edu}.
}
}
\begin{document}

\maketitle
\begin{abstract}
A family of subsets of $[n]$ is \emph{intersecting} if every pair of its sets intersects. Determining the structure of large intersecting families is a central problem in extremal combinatorics. Frankl--Kupavskii and Balogh--Das--Liu--Sharifzadeh--Tran showed that for $n\geq 2k + c\sqrt{k\ln k}$, almost all $k$-uniform intersecting families are stars. Improving their result, we show that the same conclusion holds for $n\geq 2k+ 100\ln k$. Our proof uses, among others, the graph container method and the Das--Tran removal lemma.
\end{abstract}

\keywords{Extremal combinatorics,  Intersecting family, Graph container method}

\section{Introduction}

Several problems in extremal combinatorics are about determining the size or the structure of a system or collection of finite objects if it is known to satisfy certain restrictions. Once the answer to this extremal question is known, one can strengthen this result by enumerating such systems and determining their typical structure.
Some cornerstone results of this type are theorems of Kleitman~\cite{kleitmanantichain} who determined the log-asymptotics of the number of antichains in $\{0,1\}^n$, and of Erd\H{o}s--Kleitman--Rothschild~\cite{erdoskleitmanroth} who proved that almost all triangle-free graphs are bipartite. 
Motivated by these results, a large number of classical theorems in extremal combinatorics have been extended to enumerative and structural results in the past decades. In particular, the celebrated \emph{container method} of Balogh--Morris--Samotij~\cite{balogh2015independent} and Saxton--Thomason~\cite{saxton2015hypergraph} has seen particular success with such problems.


The main topic of this paper is the structure of \emph{intersecting} families. A family $\F\subset 2^{[n]}$ is \emph{intersecting} if every pair of members of $\F$ has a non-empty intersection. The seminal result of Erd\H{o}s--Ko--Rado~\cite{ekr} states that if $\F$ is a $k$-uniform intersecting family where $n\geq 2k$ then $|\F|\leq \binom{n-1}{k-1}$. The \emph{trivial intersecting family}, or star,  the family of all the sets that contain a fixed element, shows that this inequality is the best possible.

Improving a result of Balogh, Das, Delcourt, Liu and Sharifzadeh~\cite{balogh2015intersecting}, Frankl and Kupavskii~\cite{frankl2018counting} and independently Balogh, Das, Liu, Sharifzadeh and Tran~\cite{balogh2018structure} showed that if $n\geq 2k +2+c\sqrt{k \ln k}$, for some positive constant $c$, then almost all intersecting families are trivial. Let $I(n,k)$ denote the number of $k$-uniform intersecting families in $2^{[n]}$ and $I(n,k,\geq 1) $ the number of non-trivial such families.


\begin{thm}[Balogh--Das--Liu--Sharifzadeh--Tran~\cite{balogh2018structure}, Frankl--Kupavskii~\cite{frankl2018counting}]\label{thm:franklkupa}
For $n\geq  2k +2+c\sqrt{k \ln k}$ and $k \rightarrow\infty $ we have
$$I(n,k) = (n+o(1))2^{\binom{n-1}{k-1}} \quad \text{ and }\quad I(n,k,\geq 1) = o\left(2^{\binom{n-1}{k-1}}\right),$$
where $c=2$ in \cite{frankl2018counting} and $c$ was a large constant in \cite{balogh2018structure}. 
\end{thm}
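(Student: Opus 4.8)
The plan is to dispose of the trivial families by a direct inclusion--exclusion and then bound the number of non-trivial families via the container method together with a stability (``removal'') statement for the Erd\H{o}s--Ko--Rado theorem. Write $M:=\binom{n-1}{k-1}$ and $S_i:=\{A\in\binom{[n]}{k}:i\in A\}$ for the $i$-th star. The number of families contained in some star equals $\sum_{j\ge 1}(-1)^{j+1}\binom{n}{j}2^{\binom{n-j}{k-j}}$; since $\binom{n-1}{k-1}-\binom{n-2}{k-2}=\binom{n-2}{k-1}\to\infty$, every term past the first is $o(2^{M})$, so this count is $(n+o(1))2^{M}$. As $I(n,k)$ equals this count plus $I(n,k,\ge 1)$, and the trivial count is also a lower bound for $I(n,k)$, both displayed assertions follow once we show $I(n,k,\ge 1)=o(2^{M})$.

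For that, apply the graph container method to the Kneser graph $K(n,k)$ (vertex set $\binom{[n]}{k}$, edges between disjoint pairs), whose independent sets are precisely the intersecting families. This yields a family $\mathcal{C}$ of containers with: (a) every intersecting family is a subfamily of some $C\in\mathcal{C}$; (b) $\log_2|\mathcal{C}|=o(M)$; and (c) each $C\in\mathcal{C}$ spans few edges of $K(n,k)$. Feeding (c) into a supersaturated/robust form of Erd\H{o}s--Ko--Rado --- this is where the Das--Tran removal lemma enters --- upgrades (c) to a structural dichotomy: every container $C$ is either \emph{small}, with $|C|\le(1-\eta)M$, or \emph{star-like}, meaning $C\subseteq S_{x_C}\cup E_C$ for some $x_C\in[n]$ and an ``error set'' $E_C$ with $|E_C|$ much smaller than $\binom{n-1-k}{k-1}$.

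The two container types are treated separately. Non-trivial families inside small containers number at most $|\mathcal{C}|\cdot 2^{(1-\eta)M}=2^{o(M)+(1-\eta)M}=o(2^{M})$. For a star-like container $C\subseteq S_x\cup E_C$, a non-trivial $\F\subseteq C$ has a nonempty ``bad part'' $\mathcal{B}:=\F\setminus S_x\subseteq E_C$, and each member of $\F\cap S_x$ must meet every member of $\mathcal{B}$; fixing any $B_0\in\mathcal{B}$ forces $\F\cap S_x\subseteq\{A\in S_x:A\cap B_0\ne\emptyset\}$, a set of size exactly $M-\binom{n-1-k}{k-1}$. Hence the number of non-trivial $\F\subseteq C$ is at most $2^{|E_C|}\cdot 2^{\,M-\binom{n-1-k}{k-1}}$, and summing over $\mathcal{C}$ the total is at most $|\mathcal{C}|\cdot 2^{\max_C|E_C|}\cdot 2^{\,M-\binom{n-1-k}{k-1}}=o(2^{M})$ provided $\log_2|\mathcal{C}|+\max_C|E_C|=o\big(\binom{n-1-k}{k-1}\big)$, which is exactly what the hypothesis on $n$ will buy us.

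The crux --- and the step that forces $n\ge 2k+2+c\sqrt{k\ln k}$ --- is producing a container family in which both $\log_2|\mathcal{C}|$ and the error sizes $|E_C|$ are genuinely smaller than $\binom{n-1-k}{k-1}=\binom{k-1+(n-2k)}{k-1}$. This binomial only becomes super-polynomial in $k$ once $n-2k\gtrsim\sqrt{k\ln k}$, so one has to run (or iterate) the container algorithm carefully, trading off the number of containers against the size of the error sets, and one must check that $K(n,k)$ has the co-degree/expansion properties the graph container lemma requires in this near-critical range. I expect this balancing act, together with pinning down the precise quantitative form of the removal lemma it leans on, to be the main obstacle; by contrast the counting in the preceding paragraph and the trivial-family count are routine.
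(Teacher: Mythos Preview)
The paper does not prove Theorem~\ref{thm:franklkupa}: it is quoted as a known result from \cite{balogh2018structure} and \cite{frankl2018counting}, and the paper's contribution is the stronger Theorem~\ref{thm:mainthm}. So there is no ``paper's own proof'' of this statement to compare your proposal against.

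For what it is worth, your outline is broadly in the spirit of the container-based argument of \cite{balogh2018structure}, though not of \cite{frankl2018counting}, which proceeds by a direct combinatorial/injective argument without containers. One point to flag in your sketch: the Das--Tran removal lemma (Lemma~\ref{lem: D&T} here) takes as input an \emph{intersecting} family of size close to $\binom{n-1}{k-1}$, whereas a container $C$ is merely a set with few disjoint pairs; turning ``few edges in $K(n,k)$'' into the star-like dichotomy you describe requires a supersaturation statement rather than the removal lemma as stated. The paper's own proof of the stronger Theorem~\ref{thm:mainthm} sidesteps this by passing to an auxiliary bipartite graph $\mathcal H$ and invoking Das--Tran only on genuine intersecting families (see Section~\ref{sec:bigcomp}), combined with a Sapozhenko-type graph container lemma on $\mathcal H$ (Theorem~\ref{containerlemmalayer}) rather than on the Kneser graph.
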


A slightly weaker result, that covers the entire range of parameters, was obtained via the basic graph container method in~\cite{balogh2015intersecting}.

\begin{thm}[Balogh--Das--Delcourt--Liu--Sharifzadeh~\cite{balogh2015intersecting}]
For $n\geq 2k+1$ we have 




$$I(n,k) = 2^{(1+o(1))\binom{n-1}{k-1}},$$
where the $o(1)$ term goes to zero as $k\rightarrow\infty$.
\end{thm}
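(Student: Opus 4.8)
I would split the statement into the (immediate) lower bound and the (substantial) upper bound. For the lower bound, fix $x\in[n]$: every subfamily of the star $\{A\in\binom{[n]}{k}:x\in A\}$ is intersecting, and this star has $\binom{n-1}{k-1}$ members, so $I(n,k)\ge 2^{\binom{n-1}{k-1}}$. The content is therefore the matching bound $I(n,k)\le 2^{(1+o(1))\binom{n-1}{k-1}}$, where $o(1)$ is as $k\to\infty$. For this I would reformulate $k$-uniform intersecting families as independent sets in the Kneser graph $K(n,k)$ — vertex set $\binom{[n]}{k}$, edges joining disjoint pairs — and count these via the graph container method.

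Write $N=\binom nk$ and $\alpha=\binom{n-1}{k-1}=\tfrac kn N$, so that $\alpha$ is the independence number of $K(n,k)$ by Erd\H{o}s--Ko--Rado. Fix $\eps>0$. The aim is a family $\mathcal C$ of subsets of $\binom{[n]}{k}$ such that: (i) every intersecting family lies in some $C\in\mathcal C$; (ii) $|C|\le(1+\eps)\alpha$ for every $C\in\mathcal C$; (iii) $\log_2|\mathcal C|=o(\alpha)$. Given such $\mathcal C$, bounding the number of independent subsets of each $C$ crudely by $2^{|C|}$ yields $I(n,k)\le|\mathcal C|\cdot 2^{(1+\eps)\alpha}=2^{(1+\eps+o(1))\alpha}$, and letting $\eps\to0$ finishes the proof. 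To produce $\mathcal C$ I would apply the Kleitman--Winston graph container lemma, which outputs such a family as soon as the host graph satisfies an expansion (supersaturation) hypothesis: there is $b\ge1$ with $e\big(K(n,k)[\mathcal W]\big)\ge b|\mathcal W|$ for every $\mathcal W\subseteq\binom{[n]}{k}$ with $|\mathcal W|\ge(1+\eps)\alpha$. The lemma then gives $\log_2|\mathcal C|=O\big(\tfrac{\log_2 b}{b}N\big)$ (the fingerprints have size $O(N/b)=o(\alpha)$, which is also what keeps the containers of size $(1+\eps)\alpha$ after a harmless rescaling of $\eps$).

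The input $b$ comes from a quantitative Erd\H{o}s--Ko--Rado theorem: a $k$-uniform family of size $\alpha+t$ spans at least about $t\binom{n-k-1}{k-1}$ disjoint pairs — the extremal configuration being a star together with $t$ extra sets through a second fixed point — so a family $\mathcal W$ of size $\ge(1+\eps)\alpha$ spans at least $\tfrac{\eps}{1+\eps}\binom{n-k-1}{k-1}|\mathcal W|$ disjoint pairs, and one may take $b=\tfrac{\eps}{1+\eps}\binom{n-k-1}{k-1}$. (Such supersaturation is known for $n\ge 2k$: it can be derived from Katona's cyclic-permutation method together with averaging, or quoted from Frankl's work; only the qualitative fact that $b\to\infty$ with $k$ is essential.) Now I verify (iii): since $\alpha=\tfrac kn N$, property (iii) reduces to $\tfrac{n\log_2 b}{k\,b}=o(1)$. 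This is exactly where $n\ge 2k+1$ is used — it forces $n-k-1\ge k$, hence $\binom{n-k-1}{k-1}\ge k$ (indeed $\ge\binom{n-k-1}{2}\ge\tfrac14(n-k-1)^2$ once $n\ge 2k+2$ and $k\ge 3$), so $b$ is at least polynomial in $k$ while $n\le 3(n-k-1)$; a short computation then gives $\tfrac{n\log_2 b}{k\,b}=O\big(\tfrac{\log(n-k)}{n-k}\big)=o(1)$, uniformly over all $n\ge 2k+1$.

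The delicate point — the only real obstacle — is establishing the supersaturation estimate uniformly down to $n=2k+1$, where $K(n,k)$ is sparse; the container bookkeeping is then routine. The hypothesis cannot be relaxed: at $n=2k$ the Kneser graph is a perfect matching, the coefficient $\binom{n-k-1}{k-1}$ collapses to $1$, the container lemma yields no saving, and in fact $I(2k,k)=3^{\binom{2k-1}{k-1}}\neq 2^{(1+o(1))\binom{2k-1}{k-1}}$.
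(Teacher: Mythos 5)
This theorem is quoted from \cite{balogh2015intersecting} and is not reproved in the present paper, which only records that it was obtained ``via the basic graph container method'' --- precisely your route: non-trivial content only in the upper bound, reformulation as independent sets in the Kneser graph, a supersaturation input of order $b\approx\eps\binom{n-k-1}{k-1}$ (available down to $n=2k+1$, e.g.\ via the ratio-bound/expander-mixing argument using the smallest Kneser eigenvalue $-\binom{n-k-1}{k-1}$), and Kleitman--Winston containers. Your outline is correct, including the identification of the supersaturation estimate as the only substantive step and of $n=2k$ as the genuine breakdown point.
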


\noindent Our main result significantly strengthens the bound of Theorem~\ref{thm:franklkupa}.

\begin{thm}\label{thm:mainthm}
For $n\geq  2k + 100\ln k$, almost all intersecting families in $\binom{[n]}{k}$ are trivial. In particular, the number of non-trivial intersecting families $I(n,k,\geq 1)$ is $o\left(2^{\binom{n-1}{k-1}}\right)$ as $k \rightarrow\infty$.


\end{thm}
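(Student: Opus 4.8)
The plan is to use the graph container method together with the Das--Tran removal lemma to bound the number of non-trivial intersecting families. First I would set up the Kneser-type graph $G$ whose vertices are the $k$-subsets of $[n]$ and whose edges are the disjoint pairs; then a $k$-uniform intersecting family is precisely an independent set in $G$. The key point is that a non-trivial intersecting family must be a ``spread-out'' independent set: it cannot be confined to any single star. I would invoke the graph container lemma to produce a relatively small family of containers $\mathcal{C}$ such that every independent set of $G$ lies inside some $C \in \mathcal{C}$, where each container is itself an intersecting family of size close to (but slightly exceeding) $\binom{n-1}{k-1}$, or else is ``far'' from a star in the appropriate sense. The count of non-trivial intersecting families is then at most $\sum_{C \in \mathcal{C}} 2^{|C \setminus T(C)|}$-type expressions, where $T(C)$ is the largest star contained in $C$; the art is in showing this sum is $o(2^{\binom{n-1}{k-1}})$.

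The second main ingredient is the Das--Tran removal lemma for intersecting families, which says that if a $k$-uniform family $\F$ is such that it cannot be made trivial by removing few sets, then $\F$ contains many disjoint pairs --- quantitatively, $\F$ has ``intersecting-family co-degree'' bounded away from the trivial bound. I would use this to argue that the relevant containers --- the ones that are not essentially a single star plus a negligible remainder --- have size at most $(1-\delta)\binom{n-1}{k-1}$ for some $\delta = \delta(n,k) > 0$, so the number of subfamilies they contribute is at most $2^{(1-\delta)\binom{n-1}{k-1}}$, which when multiplied by the (sub-exponential in $\binom{n-1}{k-1}$) number of containers is still $o(2^{\binom{n-1}{k-1}})$. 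For the containers that do essentially coincide with a star $S_i = \{A \in \binom{[n]}{k}: i \in A\}$, I would separately bound the number of \emph{non-trivial} intersecting families inside $S_i \cup (\text{small remainder})$ by a more delicate argument: a non-trivial subfamily must use at least one set avoiding $i$, and each such set $B$ forbids all $\binom{n-1-k}{k-1}$-ish many sets in $S_i$ disjoint from $B$, forcing a large deficiency from the full star.

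The crucial quantitative improvement, getting from $n \geq 2k + c\sqrt{k\ln k}$ down to $n \geq 2k + 100\ln k$, must come from a sharper analysis of exactly this last step --- how much ``room'' is lost in a star when one insists on a non-trivial family --- combined with a container lemma whose container count is small enough to absorb the loss. I expect the main obstacle to be the regime where $n$ is very close to $2k$: there $\binom{n-1-k}{k-1}$ is small relative to $\binom{n-1}{k-1}$, so a single set $B$ disjoint from few sets of the star does not by itself force enough deficiency, and one must iterate --- choosing a maximal ``sunflower-free'' or ``cross-intersecting'' subcollection of the sets avoiding $i$ and carefully counting the union of their disjointness-neighborhoods inside $S_i$, showing this union has size $\omega(1) \cdot (\text{something})$ or at least super-constant, which suffices once the number of containers is controlled. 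Balancing the container count against this deficiency, and handling the boundary case $n = 2k$ (where the problem is genuinely different and may need to be excluded or treated by the earlier basic-container bound), will be where the bulk of the technical work lies.
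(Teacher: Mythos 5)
Your outline names the right high-level ingredients (containers plus the Das--Tran removal lemma), and your use of Das--Tran to handle families far from every star does match one half of the paper's argument (there it is combined with the Bollob\'as set-pairs bound on the number of \emph{maximal} intersecting families to dispose of families with a large ``non-star part''). But the engine of the actual improvement is missing, and it is precisely the step you defer to at the end. Applying a container lemma directly to the Kneser graph on $\binom{[n]}{k}$, as you propose, is the method of Balogh--Das--Delcourt--Liu--Sharifzadeh and only yields $2^{(1+o(1))\binom{n-1}{k-1}}$; refining it via removal/supersaturation is what produced the $n\ge 2k+c\sqrt{k\ln k}$ threshold. Your ``more delicate argument'' for near-star containers --- iterating over the sets avoiding $i$ and counting their disjointness-neighborhoods in the star --- is exactly where that approach stalls: a single $k$-set $B$ with $i\notin B$ forbids only $\binom{n-k-1}{k-1}$ sets of the star, which for $r=n-2k=\Theta(\ln k)$ is a minuscule fraction of $\binom{n-1}{k-1}$, while the entropy cost of specifying the collection of such $B$'s is comparable to the number of star-sets they forbid. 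Balancing these two quantities at the precision needed (no multiplicative slack of $2^{o(\binom{n-1}{k-1})}$ is allowed, since the target is $o(2^{\binom{n-1}{k-1}})$) is the whole difficulty, and your proposal does not supply a mechanism for it.

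The paper's mechanism is a different encoding entirely: after fixing the most frequent element $n$, a non-trivial family splits into $\A$ (complements in $[n-1]$ of the members avoiding $n$, living in layer $k+r-1$) and $\B$ (members containing $n$, living in layer $k-1$), and $\A\cup\B$ is an independent set in the biregular bipartite graph $\HH$ between these two layers. One then counts such independent sets with a Sapozhenko-type graph container lemma for $2$-linked sets (adapted from regular to biregular graphs), driven by a Kruskal--Katona isoperimetric inequality showing $|N(\A)|\ge|\A|(1+r^2/8k)$; this expansion is what makes the $2^{-|N(\A)|}$ saving in $\L_{k-1}$ beat the cost of specifying $\A$, and it is where the threshold $r\ge 100\ln k$ enters. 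Without this two-layer reduction and the isoperimetry-plus-Sapozhenko machinery (or a genuine substitute for it), your plan does not close. Your worry about $n=2k$ is moot, as the theorem assumes $n\ge 2k+100\ln k$.
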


\noindent Here ``almost all'' means that the proportion of intersecting families that are not trivial tends to zero as $k$ increases. We believe the same conclusion holds whenever $n\ge 2k+2$. 

\begin{conj}\label{ConjMainThm}

For $n\geq  2k + 2$ and $k \rightarrow\infty $ we have
$$I(n,k,\geq 1) = o\left(I(n,k)\right).$$
\end{conj}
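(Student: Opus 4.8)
The plan is to push the strategy behind Theorem~\ref{thm:mainthm} all the way down to $n=2k+2$ by strengthening its quantitative ingredients. Write $D:=\binom{n-k-1}{k-1}$; by Hilton--Milner every non-trivial intersecting $\F\subseteq\binom{[n]}{k}$ satisfies $|\F|\le\binom{n-1}{k-1}-D+1$, and $D\to\infty$ throughout $n\ge 2k+2$ (indeed $D=\binom{k+1}{2}$ when $n=2k+2$; one cannot replace $2k+2$ by $2k$, since the Kneser graph $K(2k,k)$ is a perfect matching, so $I(2k,k)=3^{\binom{2k}{k}/2}$ and almost all intersecting families there are non-trivial). Since subfamilies of the $n$ stars already give $I(n,k)\ge(1-o(1))\,n\,2^{\binom{n-1}{k-1}}$, it suffices to prove the stronger $I(n,k,\ge1)=o(2^{\binom{n-1}{k-1}})$, exactly as in Theorem~\ref{thm:mainthm}; the presumably sharp target is $I(n,k,\ge1)=2^{\binom{n-1}{k-1}-(1-o(1))D}$. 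First I would apply the graph container method to the Kneser graph $K(n,k)$ --- vertices are the $k$-sets, edges are disjoint pairs, so intersecting families are exactly the independent sets --- to obtain a family $\mathcal{C}$ of containers, each of size $(1+o(1))\binom{n-1}{k-1}$ and with $\log|\mathcal{C}|=o(\binom{n-1}{k-1})$, covering every intersecting family.

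Next I would apply the Das--Tran removal lemma to each container: either $C$ is \emph{close to a star}, i.e.\ $|C\setminus S_i|$ is small for some $i$ (here $S_i$ denotes the star at $i$), or $|C|\le(1-\delta)\binom{n-1}{k-1}$ for an absolute $\delta>0$, since an almost-intersecting family that cannot be covered by one star after deleting few of its sets has size bounded away from the Erd\H{o}s--Ko--Rado bound. Containers of the second type contribute at most $|\mathcal{C}|\cdot 2^{(1-\delta)\binom{n-1}{k-1}}=o(2^{\binom{n-1}{k-1}})$ families, so they are harmless. If $C$ is close to $S_i$, then a \emph{non-trivial} intersecting $\F\subseteq C$ must contain some $B$ with $i\notin B$, hence $B\in C\setminus S_i$; by intersectingness every member of $\F$ meets $B$, so $\F\subseteq C\cap\{A:A\cap B\ne\emptyset\}$, a set of size at most $\binom{n-1}{k-1}-D+|C\setminus S_i|$ (the part inside $S_i$ contributes $\binom{n-1}{k-1}-D$, the rest at most $|C\setminus S_i|$). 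Summing over the choice of $B$ and over $\mathcal{C}$, the near-star containers contribute at most
\[
|\mathcal{C}|\cdot\bigl(\max_{C}|C\setminus S_i|\bigr)\cdot 2^{\,\binom{n-1}{k-1}-D+\max_{C}|C\setminus S_i|},
\]
so the scheme closes as soon as one can produce containers with $|C\setminus S_i|=o(D)$ and $\log|\mathcal{C}|=o(D)$.

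The crux, and the reason the statement is still only a conjecture, is producing containers this close to stars when $D$ is merely polynomial in $k$ (of degree $n-2k$) while $\binom{n-1}{k-1}$ is exponential: the basic container lemma gives closeness only to within $\eps\binom{n-1}{k-1}$ and $\log|\mathcal{C}|=o(\binom{n-1}{k-1})$, and, since the maximum degree of $K(n,k)$ is only $\binom{n-k}{k}=\mathrm{poly}(k)$, no container lemma can output fingerprints of size $o(D/\log\binom{n}{k})$ directly. One natural attempt is to bootstrap: having placed $\F$ inside a star-like container $C$, re-encode the few members of $\F$ that avoid $i$ using that each of them confines $\F$ to a small neighbourhood, and iterate $O(1)$ times to shrink $|C\setminus S_i|$ geometrically while paying only a factor $2^{o(D)}$ per round --- tracking this iteration against a polynomial-sized $D$ is exactly where the current proof of Theorem~\ref{thm:mainthm} must give up a $\ln k$ factor. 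An alternative is to abandon containers in favour of a polymer / cluster-expansion model in which the $n$ stars are the ground states and the ``defect'' sets (those avoiding the chosen centre) are the polymers, so that convergence of the expansion would yield the asymptotics of $I(n,k)$ and $I(n,k,\ge1)$ directly. Making either route work in the full range $n\ge2k+2$ is the main obstacle.
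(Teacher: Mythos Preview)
The statement you are addressing is Conjecture~\ref{ConjMainThm}, which the paper does \emph{not} prove; it is explicitly left open. There is therefore no ``paper's proof'' to compare your attempt against, and your write-up is not a proof either: as you yourself concede in the last two paragraphs, the scheme closes only once one can produce containers with $|C\setminus S_i|=o(D)$ and $\log|\mathcal{C}|=o(D)$, and you correctly note that no existing container argument delivers this when $D=\binom{n-k-1}{k-1}$ is merely polynomial in $k$. So what you have submitted is an informed discussion of the obstruction, not a proof.

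That said, your diagnosis of the difficulty is accurate, and it is worth contrasting your outline with how the paper handles the weaker Theorem~\ref{thm:mainthm}. The paper does not run containers on the Kneser graph $K(n,k)$. Instead it fixes the most frequent element of $\F$, encodes $\F$ as an independent set in the biregular graph $\HH$ on the layers $\L_{k-1}$ and $\L_{k+r-1}$, and splits on the size of the closure $[\phi(\F)\cap\L_{k+r-1}]$. The ``large closure'' side (Section~\ref{sec:bigcomp}) is dispatched by Das--Tran together with the Bollob\'as set-pairs bound on the number of maximal intersecting families, not by containers at all. The ``small closure'' side (Section~\ref{sec:smallcomp}) uses a Sapozhenko-type container lemma in $\HH$ combined with Kruskal--Katona isoperimetry. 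The quantitative bottleneck is there: the isoperimetric gain $g_i/a_i\ge 1+\Theta(r^2/k)$ from Theorem~\ref{thm:iso} has to beat the entropy cost in the container count, which forces roughly $r^2\gtrsim r\log(k/r)$, i.e.\ $r\gtrsim\log k$. This is exactly the origin of the $100\ln k$ threshold in Theorem~\ref{thm:mainthm}, and it is the same wall your Kneser-graph formulation hits, just in different coordinates.

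In summary: your proposal is a reasonable sketch of why the conjecture is hard and of two plausible lines of attack (iterated containers, cluster expansion), but it contains an acknowledged and genuine gap at the decisive step, and the paper offers no proof to compare it to.
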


The other two remaining cases are $n\in\{2k,2k+1\}$. When $n=2k$, we can choose at most one set from every complementary pair so that $$I(2k,k)=3^{\frac{1}{2}\binom{2k}{k}}.$$

When $n=2k+1$, it is not true that almost all intersecting families are trivial, as the following construction from \cite{balogh2018structure} shows. 
For an intersecting family $\A\subset \binom{[2k]}{k}$, let
$$\F_{\A}:=\left\{F\in\binom{[2k+1]}{k}: 2k+1 \in F, ~ F\cap A \neq \emptyset \text{ for }\forall A\in\A\right\} \cup \A.$$
Note that if $\F'$ is a subfamily  of  some $\F_{\A}$ such that $\A\subset\F'$, then we can uniquely recover $\A$. 
By considering only families $\A\subset \binom{[2k]}{k}$ of size $|\A|=\binom{2k}{k}2^{-k}$, of which almost all are intersecting, a similar calculation as in \cite[Example~3.1]{balogh2021independent} shows that there are at least 
\[
\begin{split}
\sum_{\A} 2^{|\L_{k-1}| - |N(\A)|} 
&\geq
\exp\left( \binom{2k}{k}2^{-k}\right)
2^{k\binom{2k}{k}2^{-k}} \cdot 2^{\binom{2k}{k-1} - k\binom{2k}{k}2^{-k} + \frac{k^2}{2}\binom{2k}{k}2^{-2k}- o\left(k^{3/2}\right)}\\
&\geq
2^{\binom{2k}{k-1}}\exp\left(\binom{2k}{k}2^{-k} + \frac{k^2\ln 2}{2}\binom{2k}{k}2^{-2k} - o\left(k^{3/2}\right)\right)
\end{split}
\]
$k$-uniform intersecting families in $2^{[2k+1]}$, where we use the fact that the number of such families $\A$ is $(1-o(1))\exp\left( \binom{2k}{k}2^{-k}\right)
2^{k\binom{2k}{k}2^{-k}}$, and that almost all such $\A$ satisfy $|N(\A)|= k\binom{2k}{k}2^{-k} - \frac{k^2}{2}\binom{2k}{k}2^{-2k} + o\left(k^{3/2}\right)$.
This is significantly larger than the number $(2k+1-o(1))2^{\binom{2k}{k-1}}$ of trivial $k$-uniform intersecting families.

Let us describe what we believe the typical intersecting family looks like. For a family $\A\subset \binom{[2k]}{k}$ we can define an auxiliary graph $G_\A$ with vertex set $\A$, where for $X,Y\in \A$ we include the edge $XY$ if $|X\cap Y|=k-1$.
Given a set family $\F \subset \binom{[2k+1]}{k}$, we say that $\F$ is \textit{nice}, if there is an index (to simplify notation, assume $2k+1$), such that the following holds:
\begin{itemize}
    \item for $\A= \{F: 2k+1\not\in F\in \F\}$ the graph $G_\mathcal{A}$ has components only of sizes 1 or 2;
    \item $A\cap B\neq \emptyset$ for all $A\in \A$ and $B\in \F\setminus\A$.
\end{itemize}
We remark that a nice family is not necessarily intersecting, as such $\A$ might not be intersecting.
By constructing the \textit{abstract polymer model} on connected subsets of $G_{\binom{[2k]}{k}}$ of sizes only 1 or 2, and calculating its \textit{cluster expansion} with the weight function $w(S)=2^{-|N(S)|}$, one can show that the number of nice families is at most 
\[
n2^{\binom{2k}{k-1}} \exp\left(\binom{2k}{k}2^{-k} + \frac{k^2-1}{2}\binom{2k}{k}2^{-2k} + o(1)\right).
\]
For detailed explanation of the cluster expansion and its application, we refer interested readers to \cite{balogh2021independent, jenssen2020independent}.
\begin{conj}
Almost all intersecting families $\F\subset \binom{[2k+1]}{k}$ are nice. In fact, the number of intersecting families is 
$$2^{\binom{2k}{k-1}} \exp\left(\binom{2k}{k}2^{-k} + \Theta\left(k^{3/2}\right)\right).$$
\end{conj}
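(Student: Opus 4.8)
\medskip\noindent\textbf{Proof proposal.}
The plan is to follow the polymer-method route sketched in the discussion above, the key first step being an exact identity. Splitting an intersecting $\F\subset\binom{[2k+1]}{k}$ along the fixed coordinate $2k+1$ into $\A=\{F\in\F:2k+1\notin F\}$ and $\mathcal S=\{F\in\F:2k+1\in F\}$, one checks that $\F$ is intersecting if and only if $\A$ is an intersecting subfamily of $\binom{[2k]}{k}$ and $\mathcal S$ is an \emph{arbitrary} subfamily of $\L_{k-1}\setminus N(\A)$; hence
\[
I(2k+1,k)=\sum_{\A}2^{\binom{2k}{k-1}-|N(\A)|}=2^{\binom{2k}{k-1}}\,Z,\qquad Z:=\sum_{\A}2^{-|N(\A)|},
\]
the sum being over all intersecting $\A\subset\binom{[2k]}{k}$ --- equivalently, since two $k$-subsets of $[2k]$ are disjoint exactly when complementary, over all $\A$ with no complementary pair. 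Thus the statement reduces to two tasks: (i) showing $\log Z=\binom{2k}{k}2^{-k}+\Theta(k^{3/2})$, which gives the count; and (ii) showing that under the measure $\mu(\A)\propto 2^{-|N(\A)|}$ almost every $\A$ induces only components of sizes $1$ and $2$ in the Johnson graph $G=G_{\binom{[2k]}{k}}$. Since the cross-intersecting clause in the definition of ``nice'' is automatic for intersecting $\F$, task (ii) is precisely the assertion that almost all intersecting $\F$ are nice.

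For (i), the lower bound $\log Z\ge\binom{2k}{k}2^{-k}+\Omega(k^{3/2})$ is the computation already carried out in the text. The upper bound rests on the claim that $Z$ is concentrated on the \emph{dilute} range $|\A|=O(\binom{2k}{k}2^{-k})$: one establishes an isoperimetric/transversal estimate $|N(\A)|\ge(1-o(1))\,k\,|\A|$ for intersecting $\A$ of size up to roughly $\binom{2k}{k}/k^{2}$, together with a crude bound $|N(\A)|\ge(1-o(1))\binom{2k}{k-1}$ for larger $\A$, and combines these with the entropy count $3^{\binom{2k}{k}/2}$ of intersecting families in $\binom{[2k]}{k}$ to see that everything outside the dilute range contributes a vanishing fraction of $Z$. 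On the dilute range $2^{-|N(\A)|}=(1+o(1))\prod_{A\in\A}2^{-|N(A)|}\cdot 2^{|E(G[\A])|}$, because $|N(A)|=k$ for each $A$, distinct co-neighborhoods meet in at most one point (exactly one when $|A\cap A'|=k-1$), and triple overlaps contribute negligibly; this is a soft-core gas on the $k^{2}$-regular graph $G$ with vertex fugacity $2^{-k}\to 0$, the Koteck\'y--Preiss condition holds, and the cluster expansion of its logarithm converges and can be truncated, giving $\log Z=\binom{2k}{k}2^{-k}+\tfrac{k^{2}-1}{2}\binom{2k}{k}2^{-2k}+o(1)$ --- in particular the asserted $\binom{2k}{k}2^{-k}+\Theta(k^{3/2})$, and in fact one order sharper.

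For (ii), an intersecting $\F$ that is not nice has, in particular, $\A_{2k+1}=\{F\in\F:2k+1\notin F\}$ containing the three vertices of some path $P_{3}$ in $G$. Inside the convergent cluster expansion one has $\mu\big[\,V(P_{3})\subseteq\A\,\big]\le\sum_{S\supseteq V(P_{3})}2^{-|N(S)|}=O(2^{-3k})$ (the smallest such polymer has weight at most $2^{-(3k-3)}$, and larger ones decay geometrically), so a union bound over the $O\!\big(k^{2}\binom{2k}{k}\big)$ copies of $P_{3}$ yields $\mu\big[\,\A\text{ has a component of size}\ge3\,\big]=O\!\big(k^{2}\binom{2k}{k}2^{-3k}\big)=O\!\big(k^{3/2}2^{-k}\big)=o(1)$, whence $\Pr[\F\text{ not nice}]=o(1)$. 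One should note that naive first- and second-moment bounds do not suffice here, since the worst-case overlap of $N(V(P_{3}))$ with the rest of $N(\A)$ is of order $k$ rather than $0$; the convergent expansion is exactly what supplies the ``typically disjoint'' saving.

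All of this is in the spirit of \cite{balogh2021independent,jenssen2020independent}, and I expect the genuinely delicate points to be the following. First, the isoperimetric/transversal estimate bounding $|N(\A)|$ from below over the whole intermediate range of $|\A|$: this is what confines $Z$ to the dilute regime and legitimizes the cluster expansion, and I would expect it to be the main obstacle. Second, the clean verification of the Koteck\'y--Preiss condition given the slightly awkward geometry of the co-neighborhoods $N(A)$ on the slice --- in particular the sporadic configurations of $k$-sets lying inside a common $(k+1)$-set, for which triple and higher intersections of co-neighborhoods do not vanish and must be shown to contribute $o(1)$ to $\log Z$. Third, handling the ``no complementary pair'' constraint; the cleanest route seems to be to bound its effect directly, noting that a fixed complementary pair lies in $\A$ with $\mu$-probability $O(2^{-2k})$ while there are only $O(\binom{2k}{k})$ pairs, so it alters $Z$ by a factor $1-o(1)$.
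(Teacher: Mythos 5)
You should first be clear that the paper does \emph{not} prove this statement: it is posed as a conjecture, supported only by the lower-bound construction preceding it and by a one-sentence remark that a polymer-model computation bounds the number of \emph{nice} families from above. So there is no proof in the paper to compare against; the question is whether your proposal actually closes the gap. Your starting point is sound and matches the authors' own sketch: the identity $I(2k+1,k)=2^{\binom{2k}{k-1}}\sum_{\A}2^{-|N(\A)|}$ (sum over intersecting $\A\subset\binom{[2k]}{k}$, after complementing as in \eqref{eq:defA}) is exact, the reduction of ``nice'' to ``all components of $G_\A$ have size at most $2$'' is correct for intersecting $\F$, and the cluster-expansion endgame is the natural one, in the spirit of \cite{balogh2021independent,jenssen2020independent}.

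The genuine gap is the confinement to the dilute regime, which you rightly flag as the main obstacle but then support with two lemmas that are false as stated. The estimate $|N(\A)|\geq(1-o(1))k|\A|$ already fails for $\A=\binom{[k+1]}{k}$, an intersecting family with $|\A|=k+1$ and $|N(\A)|=\binom{k+1}{2}\approx\tfrac12 k|\A|$. The ``crude bound'' $|N(\A)|\geq(1-o(1))\binom{2k}{k-1}$ for $|\A|\gtrsim\binom{2k}{k}k^{-2}$ fails for $\A=\binom{[2k-j]}{k}$ with $j\approx 2\log k$: this family is intersecting, has size about $\binom{2k}{k}k^{-2}$, and its shadow is only $(1+o(1))|\A|\ll\binom{2k}{k-1}$, which is what Kruskal--Katona says is best possible. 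Consequently the entropy-versus-boundary union bound cannot dispose of the intermediate range of $|\A|$ (for $|\A|=\tfrac12\binom{2k}{k}$ it loses by a factor $2^{\Theta(\binom{2k}{k})}$). This is exactly the regime where Sapozhenko-type containers are required, and the container theorem proved in this paper is of no help at $r=1$: $\HH$ then has essentially no expansion ($|\L_{k-1}|<|\L_{k+r-1}|$, and the ratio $g/a$ can be as small as $1+O(\log k/k)$), whereas the entire analysis of Sections 4--5 leans on $r\geq 100\ln k$, e.g.\ in \eqref{eq:tg} and \eqref{eq:upperboundfracy}. Until the weight of configurations containing a $2$-linked component of intermediate size is controlled, the Koteck\'y--Preiss condition cannot hold for the unrestricted polymer model (arbitrarily large polymers are admitted, and their counting term overwhelms their weight), and the $P_3$-probability bound in your part (ii) — which, as you note, cannot be run as a naive union bound — is likewise unjustified. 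In short: a plausible programme, with the decisive step missing, and the specific lemmas proposed for that step untrue.
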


The paper is structured as follows. In the remaining of this section we introduce some of the notation we use. In Section~\ref{sec:prelim} we associate each non-trivial $k$-uniform intersecting family to an independent set in a fixed biregular graph; we also state the main tools we use, including the graph container lemma we need. The proof of the graph container lemma is deferred to Section~\ref{sec:contlemproof}. 
We split the proof of Theorem~\ref{thm:mainthm} into two cases, according to the sizes of the $2$-linked components in the associated independent set; Section~\ref{sec:bigcomp}  and Section~\ref{sec:smallcomp} each corresponds to dealing with one of the cases.

\subsection{Notation}
Let $G=(V, E)$ be a simple graph.
The \textit{neighborhood} of a set $A$ of vertices in $G$, denoted by $N_G(A)$, is the set of vertices adjacent to $A$ in $G$.
For a set $S\subseteq V$, the \textit{neighborhood of $A$ restricted to $S$}, denoted by $N_{S, G}(v)$, is the set of vertices adjacent to $A$, which are contained in $S$.
The \textit{distance} $d_G(u, v)$ between two vertices $u, v$ is the length of the shortest path between $u$ and $v$ in $G$.
When the underlying graph is clear, we simply write $N(A)$, $N_S(A)$ and $d(u, v)$ instead.

Let $\Sigma$ be a bipartite graph with classes $X$ and $Y$.
For $A\subseteq X$, denote by $[A]:=\{v\in X: N(v)\subseteq N(A)\}$ the \textit{closure} of $A$ and call $A$ \emph{closed} if $[A]=A$.
We say a pair of vertices $u$ and $v$ is \textit{$k$-linked} in a set $A\subseteq X$, if there exists a sequence $u=v_1, v_2, \ldots, v_{\ell-1}, v_{\ell}=v$ in $A$ such that for each $i\in [\ell-1]$, the distance $d(v_{i}, v_{i+1})$ is at most $k$. 
A set $A$ is \textit{$k$-linked} if every pair of vertices in $A$ is $k$-linked in $A$.
A \textit{$k$-linked component} of a set $B\subseteq X$ is a maximal $k$-linked subset of $B$.

All instances of $\log$ refer to logarithm of base $2$. 
We ignore all floors and ceilings whenever  they are not crucial.
We also use the notation $\binom{n}{\leq k}$ as  a shorthand for $\sum_{0\leq i\leq k}\binom{n}{i}$. 
 
\section{Preliminary}\label{sec:prelim}
Let $n=2k+r$. By Theorem~\ref{thm:franklkupa}, without
specification, we may assume from now on that $k$ is sufficiently large and 
\begin{equation}\label{rrange}
100\ln k \leq r \leq 2 + 2\sqrt{k\ln k}.
\end{equation}
\subsection{Relation to independent sets}\label{subsec:relation}
Let $\mathcal{F}$ be an intersecting family in $\binom{[n]}{k}$. Define $f(\F)$ to be the most frequent element in the sets of $\F$; in case there are multiple choices, we always choose the one with the largest label.
The star of $\mathcal{F}$, denoted by $\mathcal{S}_{\mathcal{F}}$, is the set of all $k$-element subsets of $[n]$ that contain $f(\F)$.

An intersecting family is said to be \textit{maximal}, if no additional set can be added without destroying the intersecting property. We fix a linear ordering $\prec$ on the set of intersecting families in $\binom{[n]}{k}$. Let $\F^*$ be the maximal intersecting family that contains $\F$ and minimizes\footnote{Indeed, the particular choice of maximal family $\F^*$ has no significance; we simply need a way to fix the selection.} $|\F^*\Delta \mathcal{S}_{\F^*}|$; in case there are multiple such families, we choose the first one under the ordering $\prec$. 
For ease of notation, assume that $f(\mathcal{F^*})=n$. Define 
\begin{equation}\label{eq:defA}
\mathcal{A}:=\{[n-1]\setminus A: n\notin A\in \mathcal{F}\} \subseteq \binom{[n-1]}{k+r-1}
\end{equation}
and
\begin{equation}\label{eq:defB}
\mathcal{B}:=\{B\setminus\{n\}: n\in B\in \mathcal{F}\} 
\subseteq \binom{[n-1]}{k-1}.
\end{equation}
It is not hard to check that $A$ is also an intersecting family in $\binom{[n-1]}{k+r-1}$ for every $r \geq 1$.
Let $\HH$ be the bipartite graph defined on 
\[
\mathcal{L}_{k-1}:=\binom{[n-1]}{k-1} \quad \text{ and } \quad \mathcal{L}_{k+r-1}:=\binom{[n-1]}{k+r-1},
\]
such that two sets are adjacent if and only if one contains the other. 
Observe that $\mathcal{A}\cup \mathcal{B}$ is an independent set in $\mathcal{H}$. Indeed, otherwise, there exist sets $A, B\in\mathcal{F}$, where $n\in B$ and $n\notin A$, such that $B\setminus\{n\} \subseteq [n-1]\setminus A$, which implies that $A\cap B=\emptyset$, i.e., contradicts that $\F$ is an intersecting family.

Roughly speaking, this indicates that each intersecting family in $\binom{[n]}{k}$ is corresponding to a unique independent set $I$ in $\HH$ (but not vice versa, as $I\cap \binom{[n-1]}{k+r-1}$ is not necessarily intersecting). 
More precisely, let $\I(n,k)$ be the family of $k$-uniform intersecting families in $2^{[n]}$, and $\I(\HH)$ be the family of independent sets in $\HH$.
Then we can define an injective map $\phi: \I(n, k) \rightarrow [n] \times \I(\HH)$ such that \[
\phi(\F):=(f(\F^*), \A\cup \B),\] where $\A\subseteq \L_{k+r-1}$ and $\B\subseteq \L_{k-1}$.
Whenever $f(\F^*)$ is not crucial, we simply refer $\phi(\F)$ to $\A\cup \B$.
Given the close relation between intersecting families and independent sets of $\HH$, we can always learn about one object through the properties of the other one.

\subsection{Tools}
Recall that a \textit{composition} of an integer $n$ is an ordered sequence $\langle a_1,\ldots\rangle $ of positive integers summing to $n$, the $a_i$’s are the \textit{part}s of the composition.

\begin{prop}\label{prop:decom}
The  number  of  compositions  of $n$ is $2^{n-1}$ and  the  number of compositions with  at  most $b$ parts is $\sum_{i<b}\binom{n-1}{i}<2^{b\log(en/b)}$, when $b< n/2$.
\end{prop}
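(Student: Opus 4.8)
The plan is to prove the two claims separately, both by a standard stars-and-bars bijection. First I would recall the bijection between compositions of $n$ and subsets of $[n-1]$: a composition $\langle a_1,\dots,a_m\rangle$ corresponds to the set of partial sums $\{a_1,\, a_1+a_2,\,\dots,\, a_1+\cdots+a_{m-1}\}\subseteq[n-1]$. This is a bijection between all compositions of $n$ and all subsets of $[n-1]$, which immediately gives the count $2^{n-1}$ for the first claim. Moreover, under this bijection a composition with exactly $i+1$ parts corresponds to a subset of size $i$, so the number of compositions with at most $b$ parts equals the number of subsets of $[n-1]$ of size at most $b-1$, namely $\sum_{i\le b-1}\binom{n-1}{i}=\sum_{i<b}\binom{n-1}{i}$, as stated.

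For the upper bound $\sum_{i<b}\binom{n-1}{i}<2^{b\log(en/b)}$ when $b<n/2$, I would use the well-known tail estimate $\sum_{i\le t}\binom{N}{i}\le \left(\frac{eN}{t}\right)^{t}$, valid for $t\le N/2$, applied with $N=n-1$ and $t=b-1$ (or, more conveniently, with the slightly lossy bound $\sum_{i<b}\binom{n-1}{i}\le\sum_{i\le b}\binom{n}{i}\le(en/b)^{b}$ using $b\le n/2$). Taking logarithms base $2$ turns $(en/b)^{b}$ into $2^{b\log(en/b)}$, and the strictness of the inequality comes from the fact that the binomial-tail bound is not tight (e.g.\ the $i=0$ term alone already makes the sum exceed a single binomial coefficient by a positive amount, or one simply notes $\binom{n}{b}<(en/b)^b$ strictly).

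The only mild subtlety — and the single step I would be most careful about — is making sure the index bookkeeping is consistent: ``at most $b$ parts'' means $m\le b$, i.e.\ $m-1\le b-1$ boundaries, i.e.\ subsets of size $\le b-1$, so the summation is $\sum_{i=0}^{b-1}\binom{n-1}{i}=\sum_{i<b}\binom{n-1}{i}$, matching the statement exactly. After that, the inequality $\sum_{i\le t}\binom{N}{i}\le(eN/t)^t$ for $t\le N/2$ is entirely routine (it follows, for instance, by multiplying the sum by $(N/t)^t\ge(N/t)^i$ termwise and invoking the binomial theorem to bound it by $(1+t/N)^N\le e^t$), so no genuine obstacle remains.
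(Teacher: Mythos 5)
Your proof is correct. The paper states Proposition~2.1 without proof, treating it as a standard fact, and your argument is exactly the standard one: the partial-sums bijection between compositions of $n$ and subsets of $[n-1]$ (with $m$ parts corresponding to $m-1$ chosen gaps, hence ``at most $b$ parts'' giving $\sum_{i<b}\binom{n-1}{i}$), followed by the routine binomial tail estimate $\sum_{i\le b}\binom{n}{i}\le(en/b)^b=2^{b\log(en/b)}$, with strictness supplied by the discarded $\binom{n}{b}$ term. Your index bookkeeping and the use of base-$2$ logarithms both match the paper's conventions, so nothing further is needed.
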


We also need the following lemma from~\cite{Galvin2004Phase}, which bounds the number of connected subsets of a graph.
\begin{lemma}\label{lem:connected_count}
For a graph $G$ with maximum degree $\Delta$, the number of $\ell$-vertex subsets of $V(G)$ which contains a fixed vertex and induce a connected subgraph is at most $(e\Delta)^{\ell}$.
\end{lemma}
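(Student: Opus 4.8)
The plan is to pass from connected vertex subsets to subtrees of $G$ and then to count subtrees of a fixed size by an injective encoding.

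\textbf{Reduction to subtrees.} Fix the distinguished vertex $v$. If $S\subseteq V(G)$ with $v\in S$, $|S|=\ell$, and $G[S]$ is connected, then $G[S]$ has a spanning tree, which is a tree subgraph of $G$ on vertex set $S$ containing $v$; picking, say, the lexicographically smallest such spanning tree yields an injection $S\mapsto T_S$ (injective since $V(T_S)=S$) from the sets we must count into the set of $\ell$-vertex subtrees of $G$ containing $v$. Hence it suffices to bound the number $N$ of $\ell$-vertex subtrees of $G$ containing $v$ by $(e\Delta)^\ell$.

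\textbf{Encoding a subtree.} I would fix once and for all a linear order on $V(G)$, which for each $u$ orders its neighbours $n_1(u),\dots,n_{\deg(u)}(u)$. Given such a subtree $T$, root it at $v$ and list $V(T)$ as $x_1=v,x_2,\dots,x_\ell$ in breadth-first order, exploring the children of each vertex in increasing neighbour-index order. For every $j$ let $S_j\subseteq\{1,\dots,\deg(x_j)\}\subseteq[\Delta]$ be the set of indices $i$ for which $n_i(x_j)$ is a child of $x_j$ in $T$; then $\sum_j|S_j|=\ell-1$, as each non-root vertex has exactly one parent. The tuple $(S_1,\dots,S_\ell)$ recovers $T$: run BFS from $v$, and when the $j$-th vertex $x_j$ is processed declare its children to be $\{n_i(x_j):i\in S_j\}$, enqueued in increasing order of $i$. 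So $T\mapsto(S_1,\dots,S_\ell)$ is injective, whence $N$ is at most the number of tuples $(S_1,\dots,S_\ell)$ of subsets of $[\Delta]$ with $\sum_j|S_j|=\ell-1$, which is the number of $(\ell-1)$-subsets of $[\ell]\times[\Delta]$. Therefore
$$N\;\le\;\binom{\Delta\ell}{\ell-1}\;\le\;\binom{\Delta\ell}{\le \ell}\;\le\;\left(\frac{e\Delta\ell}{\ell}\right)^\ell\;=\;(e\Delta)^\ell,$$
using the standard estimate $\binom{m}{\le k}\le(em/k)^k$, and the lemma follows.

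\textbf{Where the difficulty lies.} The only real issue is the constant: a cruder encoding — say, a depth-first walk on $T$ together with a Catalan-type up/down pattern — gives only bounds of the shape $(4\Delta)^{\ell}$. Recording the children of each vertex as a subset of $[\Delta]$ and then summing over how the $\ell-1$ children are distributed among the $\ell$ vertices is exactly what collapses the count into the single binomial coefficient $\binom{\Delta\ell}{\ell-1}$ and produces the factor $e$. I expect the main care to be in checking that the BFS order used to define $(S_j)$ and the queue-based reconstruction are genuinely mutually inverse — in particular, that tuples with some $S_j$ containing an index $>\deg(x_j)$, or tuples that ``run out'' of vertices, merely fail to be images and cause no trouble.
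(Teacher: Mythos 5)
Your proof is correct. The paper does not actually prove this lemma itself — it imports it from Galvin--Kahn \cite{Galvin2004Phase} — and your argument (reduce to spanning subtrees containing the root, encode a subtree by the child-index sets along a BFS order, and count tuples with $\sum_j|S_j|=\ell-1$ by $\binom{\Delta\ell}{\ell-1}\le(e\Delta)^{\ell}$) is precisely the standard proof of that cited result, so there is nothing to fix beyond the irrelevant degenerate case $\Delta\le 1$.
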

Given an $(r, s)$-biregular graph $G$, we can create an auxiliary graph $G_k$ on the same vertex set by connecting two vertices if their distance is at most $2k$. Then $G_k$ has maximum degree at most $r^ks^k$. Applying Lemma~\ref{lem:connected_count} to one part of $G_k$, we get the following corollary.
\begin{cor}\label{cor:numlinkset}
For $r, s\geq 2$, let $\Sigma$ be an $(r, s)$-biregular graph with bipartition $X\cup Y$. Then for every $m\geq 1$, the number of $2m$-linked subsets of $X$ of size $\ell$ which contain a fixed vertex is at most $\exp(2\ell m\ln(rs))$.
\end{cor}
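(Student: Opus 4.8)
The plan is to reduce the counting of $2m$-linked subsets of $X$ to the counting of connected subsets of an auxiliary graph, and then apply the preceding lemma of Galvin. First I would define the auxiliary graph $G$ on vertex set $X$, where two vertices $u,v\in X$ are joined by an edge exactly when $0<d_\Sigma(u,v)\le 2m$ (distance taken in $\Sigma$). By the definition of $2m$-linked, a subset $A\subseteq X$ is $2m$-linked in the sense of $\Sigma$ precisely when $A$ induces a connected subgraph of $G$: the linking sequences $u=v_1,\dots,v_\ell=v$ with consecutive distances at most $2m$ are exactly the paths in $G$. Hence the number of $2m$-linked $\ell$-subsets of $X$ containing a fixed vertex equals the number of connected $\ell$-subsets of $G$ containing that vertex.

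Next I would bound the maximum degree $\Delta(G)$. Fix $u\in X$. Any $v\in X$ with $d_\Sigma(u,v)\le 2m$ lies in $X$, so $d_\Sigma(u,v)$ is even, say $d_\Sigma(u,v)=2j$ for some $1\le j\le m$. The number of vertices of $X$ at distance exactly $2j$ from $u$ is at most the number of walks of length $2j$ starting at $u$, which alternate $X\to Y\to X\to\cdots$; such a walk has $j$ steps out of $X$-vertices (each offering at most $r$ choices) and $j$ steps out of $Y$-vertices (each offering at most $s$ choices), so there are at most $r^j s^j$ of them. Summing over $j=1,\dots,m$ gives
\[
\Delta(G)\le \sum_{j=1}^{m} (rs)^j \le (rs)^m \cdot \frac{rs}{rs-1} \le 2(rs)^m,
\]
using $rs\ge 4$. (Even the cruder bound $\Delta(G)\le m\cdot (rs)^m$ would suffice for what follows.)

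Finally I would plug this into Galvin's lemma: the number of connected $\ell$-subsets of $G$ containing a fixed vertex is at most $(e\Delta(G))^\ell \le \bigl(2e(rs)^m\bigr)^\ell = (2e)^\ell (rs)^{\ell m}$. It remains to check this is at most $\exp(2\ell m\ln(rs))=(rs)^{2\ell m}$. Since $rs\ge 4$, we have $(rs)^{\ell m}\ge 4^{\ell m}\ge (2e)^\ell$ whenever $m\ge 1$ (indeed $4^{m}\ge 2e$ already for $m=1$), so $(2e)^\ell (rs)^{\ell m}\le (rs)^{\ell m}(rs)^{\ell m}=(rs)^{2\ell m}$, as desired.

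The only mild subtlety — and the step I would be most careful about — is the degree computation for $G$: one must remember that within one side of a bipartite graph only even distances occur, and bound the number of vertices at each even distance $2j$ by the walk count $r^j s^j$ rather than naively by $(rs)^{2j}$; getting the exponent right here is exactly what makes the final bound come out as $\exp(2\ell m\ln(rs))$ rather than something weaker. Everything else is a direct invocation of the quoted lemma together with the trivial inequality $rs\ge 4$ absorbing the $(2e)^\ell$ and $\frac{rs}{rs-1}$ factors.
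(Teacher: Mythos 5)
Your approach is exactly the paper's: the paper justifies the corollary with a one-sentence remark that a $2m$-linked subset of $X$ is connected in an auxiliary graph of maximum degree at most $r^m s^m$, and then invokes Galvin's lemma; you have filled in precisely those details. There is, however, one numerical slip in your final verification. You bound $\Delta(G)\le 2(rs)^m$ and then claim $(2e)^\ell\le (rs)^{\ell m}$ because ``$4^m\ge 2e$ already for $m=1$'' --- but $4<2e\approx 5.44$, so for $m=1$ and $rs\in\{4,5\}$ your chain gives $(2e\,rs)^\ell$, which exceeds $(rs)^{2\ell}$ (e.g.\ $8e>16$ when $rs=4$). The statement is still true there; only your constant is too lossy. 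Two easy fixes: (a) keep your geometric-series bound but use $\sum_{j=1}^m (rs)^j\le \tfrac{rs}{rs-1}(rs)^m\le \tfrac43 (rs)^m$ for $rs\ge 4$, so Galvin gives $\left(\tfrac{4e}{3}(rs)^m\right)^\ell$ and $\tfrac{4e}{3}\approx 3.63\le (rs)^m$; or (b) observe, as the paper implicitly does, that every $v\in X$ with $d_\Sigma(u,v)\le 2m$ is reached by a walk of length \emph{exactly} $2m$ (pad a shortest path by traversing its last edge back and forth), so $\Delta(G)\le r^m s^m$ outright, and then $\left(e(rs)^m\right)^\ell=\exp\left(\ell+\ell m\ln(rs)\right)\le\exp\left(2\ell m\ln(rs)\right)$ since $\ln(rs)\ge\ln 4>1$. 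With either repair your proof is complete and matches the paper's.
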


Next we will present some useful properties of $\HH$.
First observe that $\mathcal{H}$ is a $\left(\binom{k+r-1}{k-1}, \binom{k+r}{r}\right)$-regular bipartite graph with the bipartition $\L_{k+r-1}\cup \L_{k-1}$. Define 
\[
d:=\binom{k+r-1}{k-1}.
\]
Then $\mathcal{H}$ is `almost' $d$-regular as $\binom{k+r}{r}=\frac{k+r}{k}\binom{k+r-1}{k-1}=\frac{k+r}{k}d$ and $r=o(k)$. Notice that
\begin{equation}\label{eq:lowerBoundd}
d=\binom{k+r-1}{k-1}=\frac{(k+r-1)\cdot \ldots \cdot k}{r!}\geq \left(\frac{k}{r}\right)^r    
\end{equation}
and
\begin{equation}\label{eq:upperboundd}
    d=\frac{(k+r-1)\cdot \ldots \cdot k}{r!}\leq\frac{(2k)^r}{(r/5)^r}=\left(\frac{10k}{r}\right)^r, 
\end{equation}
where the last inequality is obtained using Stirling's approximation.

We use isoperimetric inequalities on $\HH$, which can be easily derived from
direct applications of the Lov{\' a}sz version of the Kruskal-Katona Theorem~\cite{kruskal1963number, katona2009theorem}.

\begin{thm}[Lov{\' a}sz~\cite{lovasz2007combinatorial}]\label{thm:lovasz}
Let $\mathcal{A}$ be a family of $m$-element subsets of a fixed set U and $\mathcal{B}$ be the family of all $(m-q)$-element subsets of the sets in $\mathcal{A}$. If $|\mathcal{A}|=\binom{x}{m}$ for some real number $x$, then
$|\mathcal{B}|\geq \binom{x}{m-q}$.
\end{thm}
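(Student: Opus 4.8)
The statement is the Lov\'asz form of the Kruskal--Katona theorem, and the plan is to reduce the general statement to the single-shadow case and then obtain that case from Kruskal--Katona together with an analytic comparison. Throughout, write $\partial\mathcal{C}$ for the shadow of a family $\mathcal{C}$ of $m$-sets, i.e.\ the family of all $(m-1)$-subsets of members of $\mathcal{C}$, so that the family $\mathcal{B}$ in the statement equals $\partial^{(q)}\mathcal{A}$; we may assume $\mathcal{A}\neq\emptyset$, so the real $x$ with $\binom{x}{m}=|\mathcal{A}|$ satisfies $x\ge m$. Here $\binom{t}{j}$ denotes the polynomial $t(t-1)\cdots(t-j+1)/j!$, which for integer $j\ge 0$ is nonnegative and strictly increasing in $t$ on $[j-1,\infty)$ and obeys the Pascal-type identity $\binom{t+1}{j}=\binom{t}{j}+\binom{t}{j-1}$.

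\emph{Reduction to $q=1$.} Suppose the case $q=1$ is known. Given $\mathcal{A}$ with $|\mathcal{A}|=\binom{x}{m}$, the $q=1$ case gives $|\partial\mathcal{A}|\ge\binom{x}{m-1}$; since $t\mapsto\binom{t}{m-1}$ is continuous and increasing on $[m-2,\infty)$, there is a real $x'\ge x$ with $\binom{x'}{m-1}=|\partial\mathcal{A}|$. Applying the $q=1$ case to the family $\partial\mathcal{A}$ of $(m-1)$-sets yields $|\partial^{(2)}\mathcal{A}|\ge\binom{x'}{m-2}\ge\binom{x}{m-2}$, and iterating $q-1$ more times gives $|\partial^{(q)}\mathcal{A}|\ge\binom{x}{m-q}$, which is the claim.

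\emph{The case $q=1$.} First apply the standard compression (shifting) reduction: the operators $S_{ij}$ do not increase the shadow and preserve $|\mathcal{A}|$, so after applying them repeatedly one may assume $\mathcal{A}$ is an initial segment of the colexicographic order on $\binom{[N]}{m}$; this is the content of the Kruskal--Katona theorem. For such a $\mathcal{A}$, choosing $a$ with $\binom{a}{m}\le|\mathcal{A}|<\binom{a+1}{m}$, one has the decomposition $\mathcal{A}=\binom{[a]}{m}\cup\{C\cup\{a+1\}:C\in\mathcal{C}\}$ where $\mathcal{C}\subseteq\binom{[a]}{m-1}$ is again a colex initial segment, whence $|\partial\mathcal{A}|\ge\binom{a}{m-1}+|\partial\mathcal{C}|$. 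Inducting on $m$ (base case $m=1$ immediate, since then $\partial\mathcal{A}=\{\emptyset\}$), apply the inductive hypothesis to $\mathcal{C}$: $|\partial\mathcal{C}|\ge\binom{y}{m-2}$, where $\binom{y}{m-1}=|\mathcal{A}|-\binom{a}{m}$. It then remains to verify the numerical inequality
\[
\binom{a}{m-1}+\binom{y}{m-2}\ \ge\ \binom{x}{m-1}\qquad\text{whenever}\qquad\binom{a}{m}+\binom{y}{m-1}=\binom{x}{m},\ \ a\le x\le a+1.
\]
At the endpoints $x=a$ (where $\mathcal{C}=\emptyset$) and $x=a+1$ (where $y=a$) this holds with equality by the Pascal-type identity, and for $x$ strictly between it follows from a convexity/mean-value estimate on $t\mapsto\binom{t}{m-1}$ near $t=a$.

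I expect the main obstacle to be precisely this last numerical comparison: converting the exact cascade bound supplied by Kruskal--Katona into the clean quantity $\binom{x}{m-1}$ requires careful control of the real-valued binomial coefficient near the break point $x=a$, together with the right convexity estimate, whereas the compression machinery and the passage from $q$ to $q=1$ are routine.
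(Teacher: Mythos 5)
The paper does not actually prove this theorem: it is imported as a black box from Lov\'asz's book, so there is no internal argument to measure you against. Your overall plan --- reduce to $q=1$ by iterating, compress to a colexicographic initial segment via Kruskal--Katona, decompose $\mathcal{A}=\binom{[a]}{m}\cup\{C\cup\{a+1\}:C\in\mathcal{C}\}$, and induct on $m$ --- is a legitimate known route, and the $q$-to-$1$ reduction and the decomposition $|\partial\mathcal{A}|\ge\binom{a}{m-1}+|\partial\mathcal{C}|$ are correct.

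The genuine gap is exactly where you predicted: the numerical inequality $\binom{a}{m-1}+\binom{y}{m-2}\ge\binom{x}{m-1}$ under the constraint $\binom{a}{m}+\binom{y}{m-1}=\binom{x}{m}$ is asserted to follow from ``a convexity/mean-value estimate'' but is never proved, and it is not routine --- it is the entire content of passing from the exact Kruskal--Katona cascade to the Lov\'asz form. Using $\binom{x}{m-1}=\frac{m}{x-m+1}\binom{x}{m}$ and $\binom{y}{m-2}=\frac{m-1}{y-m+2}\binom{y}{m-1}$, the claim rearranges to
\[
m\binom{a}{m}\Bigl(\tfrac{1}{a-m+1}-\tfrac{1}{x-m+1}\Bigr)\ \ge\ \binom{y}{m-1}\Bigl(\tfrac{m}{x-m+1}-\tfrac{m-1}{y-m+2}\Bigr),
\]
where the right-hand side is typically positive (e.g.\ for $y$ near $a$ and $x$ near $a+1$), so no term-by-term comparison works: you must convert the slack $x-a$ into a lower bound via the constraint, and the inequality is an exact equality at $x=a+1$, so whatever estimate you use has to be sharp there. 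As written, the proof is therefore incomplete at its crux. If you want to avoid this analysis entirely, the standard arguments (Lov\'asz's original solution; Frankl's short proof) prove the $q=1$ case directly by induction without Kruskal--Katona: for a shifted family, $|\partial\mathcal{A}|\ge|\mathcal{A}_1|+|\partial\mathcal{A}_1|$ and $\partial\mathcal{A}_{\bar 1}\subseteq\mathcal{A}_1$, where $\mathcal{A}_1$ is the link of the element $1$ and $\mathcal{A}_{\bar 1}$ the rest; if $|\mathcal{A}_1|\ge\binom{x-1}{m-1}$ one finishes with Pascal's identity, and otherwise $|\mathcal{A}_{\bar 1}|>\binom{x-1}{m}$ forces $|\partial\mathcal{A}_{\bar 1}|>\binom{x-1}{m-1}>|\mathcal{A}_1|$ by a secondary induction on $|\mathcal{A}|$, a contradiction. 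That route yields $\binom{x}{m-1}$ cleanly with no numerical comparison.
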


\begin{thm}[Isoperimetry]\label{thm:iso}
Assume that $1\leq r\leq 2+ 2\sqrt{k\ln k}$. Let $\mathcal{A}\subseteq\mathcal{L}_{k+r-1}$.
\begin{itemize}

    \item[(i)] If $|\mathcal{A}|\leq \binom{n-2 -c}{k+r-1}$ for some positive integer $c$, then 
    \[
    |N(\mathcal{A})|\geq |\mathcal{A}|\left(1 + \frac{c}{k+r-c-1}\right)\cdot\ldots \cdot\left(1 + \frac{c}{k-c}\right).
    \]
    \item[(ii)] If $|\mathcal{A}|\leq d^3$, then $|N(\A)|\geq d|\A|/(5e)^r$.
\end{itemize}
\end{thm}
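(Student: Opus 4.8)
My plan is to derive both parts directly from the Lovász version of Kruskal–Katona (Theorem~\ref{thm:lovasz}), treating $\mathcal{A}\subseteq\mathcal{L}_{k+r-1}$ as a family of $(k+r-1)$-element subsets of $[n-1]$ and $N(\mathcal{A})\subseteq\mathcal{L}_{k-1}$ as a family of $(k-1)$-element subsets, so that the shadow step has parameter $q=r$. Write $|\mathcal{A}|=\binom{x}{k+r-1}$ for the appropriate real $x$; the hypothesis $|\mathcal{A}|\le\binom{n-2-c}{k+r-1}$ in part~(i) translates to $x\le n-2-c$, i.e. $x\le 2k+2r-2-c$ (using $n=2k+r$, wait — $n-1=2k+r-1$, so $x \le 2k+r-2-c$). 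Then Theorem~\ref{thm:lovasz} gives $|N(\mathcal{A})|\ge\binom{x}{k-1}$, and the ratio $\binom{x}{k-1}/\binom{x}{k+r-1}$ telescopes as
\[
\frac{\binom{x}{k-1}}{\binom{x}{k+r-1}}=\prod_{i=0}^{r-1}\frac{k+r-1-i}{x-(k-1)-i}\,,
\]
wait, let me re-index: $\binom{x}{k-1}/\binom{x}{k+r-1} = \prod_{j=k-1}^{k+r-2}\frac{j+1}{x-j}$. Each factor is of the form $\tfrac{m+1}{x-m}$ with $k-1\le m\le k+r-2$; since $x\le 2k+r-2-c$ we have $x-m\le k+r-1-c$ (wait, when $m=k-1$: $x-m \le k+r-1-c$; when $m=k+r-2$: $x-m\le k-c$), so $\tfrac{m+1}{x-m}\ge\tfrac{m+1}{(m+1)+(x-2m-1)}$, and one checks $x-2m-1$ ranges appropriately to yield the stated product $\bigl(1+\tfrac{c}{k+r-c-1}\bigr)\cdots\bigl(1+\tfrac{c}{k-c}\bigr)$ of exactly $r$ factors. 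The bookkeeping here — matching each telescoped factor to the claimed factor $1+\tfrac{c}{k+r-c-j}$ — is the one place I need to be careful, but it is purely arithmetic.

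For part~(ii), I would combine part~(i) (or directly Kruskal–Katona) with the crude size bound. If $|\mathcal{A}|\le d^3$, then since $d=\binom{k+r-1}{k-1}=\binom{(k+r-1)}{k-1}$ and $d^3$ is polynomially small compared to $\binom{n-1}{k+r-1}$, the real number $x$ with $\binom{x}{k+r-1}=|\mathcal{A}|$ satisfies $x\le k+r-1+o(k)$; more usefully, each factor $\tfrac{m+1}{x-m}$ in the telescoping product is then at least roughly $\tfrac{k}{r}$ in magnitude — this is exactly the regime where $\binom{x}{k+r-1}\le d^3$ forces $x - (k+r-1)$ to be very small relative to $k$. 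Summing over the $r$ factors and using \eqref{eq:lowerBoundd}–\eqref{eq:upperboundd} to relate $d$ to $(k/r)^r$, I get $|N(\mathcal{A})|/|\mathcal{A}|\ge d/(5e)^r$. Concretely: $|N(\mathcal A)|\ge\binom{x}{k-1}$ and I want $\binom{x}{k-1}\ge d\binom{x}{k+r-1}/(5e)^r$, i.e. the shadow ratio is at least $d/(5e)^r$; since the minimum of the ratio over the allowed range of $x$ is attained at the largest $x$, and $x$ is largest when $|\mathcal A|=d^3$, I reduce to a single explicit inequality in $k,r$ that follows from Stirling as in \eqref{eq:upperboundd}.

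The main obstacle I anticipate is not conceptual but the precise constant-chasing in part~(ii): I must verify that even at the extreme $|\mathcal{A}|=d^3$ the Kruskal–Katona shadow is still at least $d|\mathcal{A}|/(5e)^r$, which requires a clean upper bound on how much $x$ can exceed $k+r-1$ when $\binom{x}{k+r-1}=d^3$. I expect to handle this by writing $x=k+r-1+t$ and bounding $\binom{x}{k+r-1}\ge\bigl(\tfrac{x}{k+r-1}\bigr)^{\cdots}$ type estimates to show $t=O(1)$ or at worst $t=o(k)$ in the relevant range \eqref{rrange}, after which each telescoped factor is $\ge\tfrac{k+r-1-i}{r+t+i}\ge\tfrac{k}{5r}$ and the product of $r$ such factors beats $d/(5e)^r$ via \eqref{eq:lowerBoundd}. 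Part~(i) should go through with only the indexing care noted above.
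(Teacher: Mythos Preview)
Your approach is exactly the paper's: write $|\mathcal{A}|=\binom{x}{k+r-1}$, bound $x$ from the size hypothesis, apply Lov\'asz--Kruskal--Katona, and analyse the ratio $\binom{x}{k-1}/\binom{x}{k+r-1}$ as a product of $r$ factors. Part~(i) goes through precisely as you outline (the paper writes the product as $\frac{(k+r-1)\cdots k}{(k+r-c-1)\cdots(k-c)}$, which is your telescoping identity with $x=2k+r-2-c$).

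For part~(ii), though, two points in your sketch would not actually close. First, the correct bound on $x$ is $x\le k+5r-1$, i.e.\ $t\le 4r$; this is what the paper proves, by checking that $\binom{k+5r-1}{k+r-1}>d^3$ via $(4r)!\le 4^{4r}(r!)^4$. Your guess $t=O(1)$ is false (recall $r$ can be as large as $2\sqrt{k\ln k}$), and $t=o(k)$, while true, is not sharp enough for the constant. Second, your per-factor lower bound $\tfrac{k}{5r}$ is too crude: the product $(k/(5r))^r$ need \emph{not} dominate $d/(5e)^r$, since $d$ can be as large as $\bigl(e(k+r-1)/r\bigr)^r$, and for $r\asymp\sqrt{k\ln k}$ the gap is a polynomial factor in $k$. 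The paper instead keeps the exact numerator $(k+r-1)\cdots k = d\cdot r!$ and the exact denominator $(5r)\cdots(4r+1)=(5r)!/(4r)!$, so the ratio is precisely $d/\binom{5r}{r}\ge d/(5e)^r$. This is a one-line fix, but your per-factor estimate as written loses exactly the $r!$ needed.
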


\begin{proof}
(i) Let $|\mathcal{A}|=\binom{x}{k+r-1}$ for some $x$. Using the upper bound on the size of $\mathcal{A}$, we have $x\leq 2k
+r-2 - c$.
By Theorem~\ref{thm:lovasz}, we have
\[
\begin{split}
|N(\mathcal{\mathcal{A}})|&\geq \binom{x}{k-1}=\binom{x}{k+r-1}\frac{(k+r-1)\cdot\ldots\cdot k}{(x-k+1)\cdot\ldots \cdot(x-k-r+2)}\\
&\geq |\mathcal{A}|\frac{(k+r-1)\cdot\ldots\cdot  k}{(k+r-c-1)\cdot\ldots \cdot(k-c)}
=|\mathcal{A}|\left(1 + \frac{c}{k+r-c-1}\right)\cdot\ldots \cdot\left(1 + \frac{c}{k-c}\right).
\end{split}
\]

(ii) Let $|\mathcal{A}|=\binom{x}{k+r-1}$ for some $x$. 
First note that $(4r)!\leq 4^{4r}(r!)^4$.
By the assumption on the size of $|\mathcal{A}|$, we have $x\leq k+5r-1$, as otherwise
\[
\begin{split}
\binom{x}{k+r-1}&>\binom{k+5r-1}{k+r-1}=\frac{(k+5r-1)\cdot\ldots\cdot (k+r)}{(4r)!} \\
&> \frac{k^r}{4^{4r}r!}\left(\frac{(k+r-1)\cdot\ldots\cdot k}{r!}\right)^3 
\geq \binom{k+r-1}{r}^3=d^3,
\end{split}
\]
where the last inequality follows from the upper bound of $r$.
Similarly as in (i), by Theorem~\ref{thm:lovasz}, we have
\[
\begin{split}
|N(\mathcal{A})|&\geq \binom{x}{k-1}=\binom{x}{k+r-1}\frac{(k+r-1)\cdot\ldots\cdot  k}{(x-k+1)\cdot \ldots \cdot(x-k-r+2)}\\
&\geq |\mathcal{A}|\frac{(k+r-1)\cdot\ldots\cdot k}{(5r)\cdot\ldots\cdot  (4r+1)}
=d|\A|/\binom{5r}{r} \geq \frac{d|\mathcal{A}|}{(5e)^r}.
\end{split}
\]
\end{proof}

We state and use a special case of the Das--Tran  \cite[Theorem 1.2]{DasTranRemoval} removal lemma, obtained by setting $\ell=1$ and $\beta=0$, in their notation. 
\begin{lemma}[Das and Tran~\cite{DasTranRemoval}]\label{lem: D&T}
There is an absolute constant $C>1$ such that if $n$ and $k$ are positive integers satisfying $n>2k$, and $\mathcal{F}\subset \binom{[n]}{k}$ is an intersecting family of size $|\mathcal{F}|=(1-\alpha)\binom{n-1}{k-1}$, where $2 |\alpha|\leq \frac{n-2k}{(20C)^2n}$, then there exists a family $\mathcal{S}\subset \binom{[n]}{k}$ that is a star, satisfying 
\[
|\mathcal{F}\Delta \mathcal{S}|\leq C\alpha \frac{n}{n-2k}\binom{n-1}{k-1}.
\]
\end{lemma}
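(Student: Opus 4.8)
The plan is to obtain Lemma~\ref{lem: D&T} by directly specializing \cite[Theorem~1.2]{DasTranRemoval}. That theorem is a genuine removal/stability statement: it allows $\mathcal F\subseteq\binom{[n]}{k}$ to fall slightly short of being intersecting — the defect being measured by a parameter $\beta$ — and it compares $\mathcal F$ with a comparison family whose structure is controlled by a parameter $\ell$, rather than with a single star; its symmetric-difference bound is an absolute constant times $\frac{n}{n-2k}\binom{n-1}{k-1}$ times a linear combination of $\alpha$ and $\beta$. Substituting $\ell=1$ collapses the comparison family to a single star, and substituting $\beta=0$ both forces $\mathcal F$ to be genuinely intersecting and deletes the $\beta$-term from the conclusion; what remains is exactly $|\mathcal F\Delta\mathcal S|\le C\alpha\frac{n}{n-2k}\binom{n-1}{k-1}$ with $\mathcal S$ a star. (The appearance of $|\alpha|$ rather than $\alpha$ in the hypothesis is a vestige of the general statement, in which $|\mathcal F|$ may exceed $\binom{n-1}{k-1}$; once $\beta=0$ and $n>2k$, Erd\H{o}s--Ko--Rado~\cite{ekr} forces $\alpha\ge 0$ anyway.)

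The only bookkeeping left is to confirm that the hypothesis $2|\alpha|\le\frac{n-2k}{(20C)^2n}$ is precisely the admissible range of the defect parameter in \cite[Theorem~1.2]{DasTranRemoval} once $\ell$ is set to $1$ and $\beta$ to $0$, and that a single absolute constant $C$ can be taken to serve simultaneously in the hypothesis and in the conclusion; this amounts to renaming the absolute constants of \cite{DasTranRemoval} and checking one chain of inequalities, with no new idea required. As a sanity check, the hypothesis then makes $|\mathcal F\Delta\mathcal S|$ at most a $\frac{1}{800C}$-fraction of $\binom{n-1}{k-1}$, consistent with the fact that the lemma only asserts anything when $\mathcal F$ is already extremely close to the Erd\H{o}s--Ko--Rado bound.

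If instead one wanted a self-contained argument, the natural route is to first apply a stability form of Erd\H{o}s--Ko--Rado (via Frankl's shifting, or via Hilton--Milner together with a counting estimate) to locate a vertex $x$ through which all but a tiny fraction of $\mathcal F$ passes, set $\mathcal S$ to be the star at $x$, observe the identity $|\mathcal F\Delta\mathcal S|=\alpha\binom{n-1}{k-1}+2m$ with $m:=|\mathcal F\setminus\mathcal S|$, and then bound $m$ by double-counting disjoint pairs $(F,G)$ with $F\in\mathcal F\setminus\mathcal S$ and $G$ a $k$-set containing $x$: each such $G$ is forced to lie in $\mathcal S\setminus\mathcal F$ since $\mathcal F$ is intersecting. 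The main obstacle is that the crude version of this double count is too weak — a single hole $G$ can be charged by many more sets $F$ than one $F$ produces holes — so squeezing out the sharp $\frac{n}{n-2k}$ dependence needs the finer compression/kernel analysis of \cite{DasTranRemoval}, which is exactly why we quote their result rather than reprove it here.
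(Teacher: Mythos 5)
Your proposal matches the paper exactly: the paper gives no proof of this lemma either, stating only that it is the special case of \cite[Theorem~1.2]{DasTranRemoval} obtained by setting $\ell=1$ and $\beta=0$, which is precisely the specialization you describe. The additional remarks on the sign of $\alpha$ and on why a naive self-contained double-counting argument would lose the sharp $\frac{n}{n-2k}$ factor are correct but not needed.
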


\subsection{The graph container theorem}
The most important tool of this paper is the following graph container theorem, which estimates the number of 2-linked sets in $\HH$. The proof follows from a graph container lemma of Sapozhenko~\cite{sapozhenko1987number} and will be postponed to Section~\ref{sec:contlemproof}.

\begin{thm}[Graph container theorem]\label{containerlemmalayer}
Assume that $1\leq r \leq 2+2\sqrt{k\ln k}$.
Recall that $\HH$ is a $(d, \frac{k+r}{k}d)$-biregular bipartite graph on $\L_{k+r-1}\cup \L_{k-1}$, where $d=\binom{k+r-1}{k-1}$.
For integers $a, g\geq 1$, let
\[
\mathcal{G}(a, g)=\{A\subseteq \mathcal{L}_{k+r-1}: A \text{ is a } 2\text{-linked set},\ |[A]|=a,\ |N(A)|=g\},
\]
and 
\[t:=\frac{k+r}{k}gd -ad.\]
For every $1\leq a \leq \binom{2k+3r/4}{k+r-1}$ and $1\leq \varphi, \psi \leq d -1$, there exists a family of containers $\mathcal{W}(a, g)\subseteq 2^{\mathcal{L}_{k+r-1}}\times 2^{\mathcal{L}_{k-1}}$ of size
\[
\begin{split}
|\mathcal{W}(a, g)|
\leq&\ |\mathcal{L}_{k-1}|\exp\left(
O\left(\frac{g\ln^2 d}{\varphi d}\right) 
+ O\left(\frac{t\ln^2 d}{d(d - \varphi)}\right)
+ O\left(\frac{t\ln^2 d}{\varphi d}\right)\right)\cdot\\
&
\exp\left( O\left(\frac{t\ln d}{(d - \varphi)\psi}\right)
+ O\left(\frac{t\ln d}{(d - \psi)\psi}\right)
\right),
\end{split}
\] and a function $f: \mathcal{G}(a, g) \rightarrow \mathcal{W}(a, g)$ such that for each $A\in \mathcal{G}(a, g)$, the pair $(S, F):=f(A)$ satisfies:
\begin{itemize}
    \item[(i)] $S\supseteq [A]$, $F\subseteq N(A)$;
    \item[(ii)] $|S| \leq \frac{k+r}{k}|F|  + \frac{\psi t}{d}\left(\frac{1}{d -\psi} + \frac{1}{\frac{k+r}{k}d -\psi}\right)$.
\end{itemize}
\end{thm}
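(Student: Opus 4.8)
The plan is to adapt Sapozhenko's container argument to the biregular setting, running it in two phases. Each 2-linked set $A$ is replaced by a certificate $(S,F)$ where $S$ is a superset of the closure $[A]$ living on the "large" side $\mathcal{L}_{k+r-1}$ and $F$ is a subset of the boundary $N(A)$ on the "small" side $\mathcal{L}_{k-1}$. The quantity $t=\frac{k+r}{k}gd-ad$ measures the "excess degree" — it is the number of edges from $N(A)$ that leave $[A]$ (using that every vertex of $[A]$ has all $d$ of its neighbors inside $N(A)$, and $|N(A)|=g$ has $\frac{k+r}{k}d\cdot g$ edges total). The parameter $\varphi$ controls a first coarsening on the $\mathcal{L}_{k+r-1}$ side and $\psi$ a subsequent refinement on the $\mathcal{L}_{k-1}$ side.

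First I would carry out the \emph{approximating/container step}: greedily choose a small "fingerprint" $T\subseteq[A]$ such that every vertex of $[A]$ has at least $d-\varphi$ neighbors in $N(T)$, i.e. $T$ dominates $[A]$ up to slack $\varphi$ in the biregular graph. A standard greedy/averaging argument (picking at each stage a vertex covering a $\ge \varphi/d$ fraction of the not-yet-covered boundary edges, as in Sapozhenko and in the Galvin-type counting) shows $|T|=O\!\left(\frac{g\ln d}{\varphi d}+\frac{t\ln d}{\varphi d}\right)$ roughly, and one encodes $T$ together with a short composition/binary string recording how $N(A)$ sits inside $N(T)$ relative to the "defect" set; Corollary~\ref{cor:numlinkset} bounds the number of 2-linked pieces that $T$ can be assembled from, and Proposition~\ref{prop:decom} bounds the number of compositions. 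This yields the first exponential factor $\exp\!\big(O(\frac{g\ln^2 d}{\varphi d})+O(\frac{t\ln^2 d}{\varphi d})\big)$ and an intermediate container pair $(S_0,F_0)$ with $S_0\supseteq[A]$, $F_0\subseteq N(A)$, but with a weaker size relation governed by $\varphi$, producing the $O(\frac{t\ln^2 d}{d(d-\varphi)})$ term as well. Then I would iterate the symmetric step on the $\mathcal{L}_{k-1}$ side with parameter $\psi$: refine $F_0$ down to $F$ by choosing a fingerprint among the boundary vertices with slack $\psi$, costing the two factors $\exp\!\big(O(\frac{t\ln d}{(d-\varphi)\psi})+O(\frac{t\ln d}{(d-\psi)\psi})\big)$, and finally read off $S$ from $F$ via $S:=\{v\in\mathcal{L}_{k+r-1}: |N(v)\cap F|\ge d-\psi\}\cap S_0$. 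The closure condition $[A]=A$ guarantees $S\supseteq[A]$ is consistent, and a direct double-counting of edges between $S$ and $F$ using the $\psi$-slack gives property (ii): $|S|\le\frac{k+r}{k}|F|+\frac{\psi t}{d}\big(\frac{1}{d-\psi}+\frac{1}{\frac{k+r}{k}d-\psi}\big)$.

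The factor $|\mathcal{L}_{k-1}|$ in front accounts for the freedom in choosing the initial seed vertex of the (connected) fingerprint, after which everything is determined by the $2$-linked structure; the bound $a\le\binom{2k+3r/4}{k+r-1}$ is used through the isoperimetric inequality Theorem~\ref{thm:iso}(i) to ensure $|N(A)|$ is comparably large, so that $t$ is not larger than $O(g)$ and the error terms are controlled. The main obstacle I expect is \emph{bookkeeping the slack parameters through the biregularity}: in the classical regular case one has a single degree $d$, but here the two sides see degrees $d$ and $\frac{k+r}{k}d$, so every averaging estimate must be done with the correct side's degree, and the greedy covering lemma must be stated so that the slack $\varphi$ (resp.\ $\psi$) interacts correctly with both $d$ and $\frac{k+r}{k}d$ — this is exactly where the somewhat unusual shape of the size bound in (ii), with both $\frac{1}{d-\psi}$ and $\frac{1}{\frac{k+r}{k}d-\psi}$, comes from. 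Getting the logarithmic exponents to come out as $\ln^2 d$ in the first pair of terms (two logs: one from the greedy iteration depth, one from encoding) and only $\ln d$ in the second pair (the refinement step reuses the already-built fingerprint) is the delicate quantitative point, and it is the step I would write out most carefully.
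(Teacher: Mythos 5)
Your plan follows essentially the same route as the paper: Theorem~\ref{containerlemmalayer} is deduced from a general biregular version of Sapozhenko's container lemma (Theorem~\ref{containerlemma}, proved in the Appendix via Lemmas~\ref{lem:con1} and~\ref{lem:con2} and Proposition~\ref{lem:psiappro}). Your first phase (a small fingerprint with slack $\varphi$, counted through its linkedness via Corollary~\ref{cor:numlinkset} together with a short encoding of the defect edges), your second phase (refinement with slack $\psi$ on both sides), and your proof of (ii) by double-counting the edges between $S$ and $N(A)$ all correspond to the paper's steps. The paper chooses the phase-one fingerprint randomly inside $N(A)$ and completes it with the Lov\'asz--Stein covering lemma (Lemma~\ref{lem:cover}) rather than by a single greedy pass, and its fingerprint lives on the $\L_{k-1}$ side rather than inside $[A]$, but these are cosmetic differences within the same method.

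Two points in your plan need repair. First, the reason a fingerprint of size $O(g\ln d/(\varphi d))$ can dominate all of the heavily-attached boundary $N(A)^{\varphi}$ is the local expansion property $m_{\varphi}:=\min\{|N(K)|: K\subseteq N(y),\ |K|>\varphi\}\gtrsim \varphi d$, which for $\HH$ is supplied by the Kruskal--Katona-type bound Theorem~\ref{thm:iso}(ii); neither a random nor a greedy selection yields a small enough fingerprint without such an input, and your outline never invokes it. Second, you have the role of the hypothesis $a\le\binom{2k+3r/4}{k+r-1}$ backwards: via Theorem~\ref{thm:iso}(i) it gives $g\ge a\left(1+\Omega(r/k)\right)$, hence a \emph{lower} bound $t=\frac{k+r}{k}gd-ad\ge \Omega(gdr/k)$, equivalently $gd/t\le O(k/r)\le O\!\left(d^{1/r}\right)$; this is exactly what allows the binomial coefficients such as $\binom{gs}{\le t/((s-\varphi)\psi)}$ coming out of the two phases to be absorbed into factors of the form $\exp\!\left(O(t\ln d/(\cdot))\right)$ with only a single $\ln d$. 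In particular $t$ is typically of order $gdr/k\gg g$, so it is certainly not $O(g)$, and nothing in the argument requires that.
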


\subsection{Outline of the proof of Theorem 1.3}\label{subsec:thm1.3sketch}


First we will assign to each non-trivial intersecting family $\mathcal{F}\subset\binom{[n]}{k}$ an independent set $\phi(\mathcal{F})$ in $\HH$ through the process described in Section~\ref{subsec:relation}. After doing this, we split the counting into cases according to whether $|[\phi(\mathcal{F})\cap \mathcal{L}_{k+r-1}]|\geq \binom{2k+3r/4}{k+r-1}$ or not.

In Section \ref{sec:bigcomp}, we prove that the number of non-trivial intersecting families with $|[\phi(\mathcal{F})\cap \mathcal{L}_{k+r-1}]|\geq \binom{2k+3r/4)}{k+r-1}$ is $2^{o(\binom{k-1}{n-1})}$. The proof, which is similar to that of~\cite[Theorem~1.6]{balogh2018structure}, is based on the Das-Tran \cite{DasTranRemoval} Removal Lemma, and an application of the Bollob\'as set pairs inequality on maximal intersecting families, see~\cite[Proposition 2.2]{balogh2015intersecting}.

We will dedicate Section~\ref{sec:smallcomp} to bound the number independent sets $I$ with $|[I\cap \mathcal{L}_{k+r-1}]|< \binom{2k+3r/4}{k+r-1}$. 
Like many of the advances in counting independent sets (for example, see~\cite{balogh2021independent, galvin2011threshold, jenssen2020independent, sapozhenko1987number}), we will use the graph container method of Sapozhenko~\cite{sapozhenko1987number}, together with some isoperimetric properties of $\HH$ (see Theorem~\ref{thm:iso}).

Adding up the bounds obtained from Sections \ref{sec:bigcomp} and \ref{sec:smallcomp} will give an upper bound for the number of non-trivial intersecting families $\mathcal{F}\subset \binom{[n]}{k}$, which leads to Theorem \ref{thm:mainthm}.

\section{Intersecting families with large components}\label{sec:bigcomp}

In this section, we will prove the following lemma,
which helps us to narrow down the family of intersecting families we have to care about. 
The proof uses an idea from \cite[Theorem 1.6]{balogh2018structure}.

Recall from Section~\ref{subsec:relation} that for each intersecting family $\F\subset \binom{[n]}{k}$, there exists a unique set $\phi(\F)$ such that $\phi(\F)$ is independent in $\HH$.

\begin{lemma}\label{claim:componentsaresmall}

The number of intersecting families $\F\subset\binom{[n]}{k}$ with
\begin{equation}\label{case:hughcomp}
\left|[\phi(\F)\cap \mathcal{L}_{k+r-1}]\right|> \binom{2k+ 3r/4}{k+r-1}
\end{equation}
is at most $2^{\binom{n-1}{k-1}-\frac{1}{2}n\binom{2k}{k}}$.
\end{lemma}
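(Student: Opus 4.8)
The plan is to reduce the count of intersecting families $\mathcal{F}$ satisfying \eqref{case:hughcomp} to a count of independent sets in $\HH$ together with an application of the Das--Tran removal lemma. First I would unpack what the hypothesis \eqref{case:hughcomp} says about the original family $\mathcal{F}$. Writing $\mathcal{A} = \{[n-1]\setminus A : n\notin A\in\mathcal{F}\}\subseteq\mathcal{L}_{k+r-1}$ as in \eqref{eq:defA}, the closure $[\mathcal{A}]$ is a closed subset of $\mathcal{L}_{k+r-1}$ of size greater than $\binom{2k+3r/4}{k+r-1}$, and by Theorem~\ref{thm:iso}(i) (the isoperimetric/Kruskal--Katona bound) a large closed set forces $|N(\mathcal{A})|$ to be very large; since $\mathcal{B}\subseteq\mathcal{L}_{k-1}$ is disjoint from $N(\mathcal{A})$ in the bipartite complement picture, this translates into $\mathcal{B}$ being small, equivalently $|\{n\in B\in\mathcal{F}\}|$ is far below $\binom{n-1}{k-1}$. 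Dually, the complements of the sets in $\mathcal{F}$ not containing $n$ form a large family, so $\mathcal{F}$ restricted to sets not containing $n$ is also small; in total $|\mathcal{F}|$ is bounded away from $\binom{n-1}{k-1}$ by a quantity of order $\Theta\!\left(\frac{1}{k}\binom{2k}{k}\cdot\text{(something)}\right)$, but more to the point $\mathcal F$ is ``far'' from every star.

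Next I would run the Das--Tran removal lemma (Lemma~\ref{lem: D&T}) in the contrapositive direction: if $\mathcal{F}$ had $|\mathcal{F}|$ close enough to $\binom{n-1}{k-1}$ it would be within small symmetric difference of a star, and then \eqref{case:hughcomp} would be violated because the $\mathcal{L}_{k+r-1}$-part of $\phi(\mathcal F)$ for a near-star is tiny. So every $\mathcal{F}$ we must count has $|\mathcal{F}| \le (1-\alpha)\binom{n-1}{k-1}$ for a suitable $\alpha = \alpha(r,k)$ bounded below; concretely one wants $\alpha$ of size roughly $r/k$ up to constants, coming from the gap between $\binom{2k+3r/4}{k+r-1}$ and the full layer. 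Then the number of such families is at most the number of ways to choose a family $\mathcal F\subseteq\binom{[n]}{k}$ of size $\le(1-\alpha)\binom{n-1}{k-1}$, which by the standard entropy/binomial estimate $\sum_{i\le(1-\alpha)m}\binom{m}{i}\le 2^{(1-H(\alpha)/\ln 2\cdot\text{correction})m}$ is at most $2^{\binom{n-1}{k-1} - c\alpha^2\binom{n-1}{k-1}}$ (or, if one tracks the Bollob\'as set-pairs bound on the number of maximal intersecting families as in \cite[Proposition 2.2]{balogh2015intersecting}, an even cleaner count). The target bound $2^{\binom{n-1}{k-1} - \frac12 n\binom{2k}{k}}$ then follows because $\alpha^2\binom{n-1}{k-1}\gg n\binom{2k}{k}$ once $r\ge 100\ln k$: here $\binom{n-1}{k-1}\approx\binom{2k}{k}\cdot(\text{poly in }k)$ and $\alpha\gtrsim r/k\gtrsim (\ln k)/k$, so $\alpha^2\binom{n-1}{k-1}$ carries a factor $\gtrsim (\ln k)^2/k^2$ against a spare factor that beats $n$.

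I expect the main obstacle to be the bookkeeping in the first step: turning the combinatorial hypothesis $|[\phi(\mathcal F)\cap\mathcal L_{k+r-1}]|>\binom{2k+3r/4}{k+r-1}$ into a quantitatively strong enough lower bound on $\alpha$, i.e.\ on how far $|\mathcal F|$ sits below $\binom{n-1}{k-1}$, while staying inside the regime $2|\alpha|\le\frac{n-2k}{(20C)^2 n}$ required by Lemma~\ref{lem: D&T}. This needs the isoperimetric estimate Theorem~\ref{thm:iso}(i) applied with $c$ of order $r/4$, and a careful expansion of the product $\prod(1+\frac{c}{k+r-j-1})$ to see it contributes a multiplicative blow-up of order $\exp(\Theta(r^2/k))$ to $|N(\mathcal A)|$ — enough to make $|\mathcal B|$, and hence $|\mathcal F|$, small by the needed margin but not so large as to break the removal lemma's hypothesis. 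A secondary technical point is verifying that a family within symmetric difference $\le C\alpha\frac{n}{n-2k}\binom{n-1}{k-1}$ of a star genuinely fails \eqref{case:hughcomp}; this is because such $\mathcal F$ omits only $O(\alpha\frac n{n-2k}\binom{n-1}{k-1})$ sets through the center $n$ and contains only that many sets avoiding $n$, so $[\mathcal A]$ is correspondingly small, well below $\binom{2k+3r/4}{k+r-1}$. Once these two estimates are pinned down, the rest is the routine binomial-sum bound and arithmetic comparing exponents.
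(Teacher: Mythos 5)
Your first half --- forcing a contradiction between the hypothesis~(\ref{case:hughcomp}) and proximity to a star via the Das--Tran removal lemma, using the fact that a near-star has $|[\phi(\F)\cap\mathcal{L}_{k+r-1}]|$ bounded by a constant times $|\F\Delta\mathcal{S}|$ --- is essentially the paper's argument (the paper applies the removal lemma to a \emph{maximal} family $\F^*\supseteq\F$ rather than to $\F$ itself, and gets the contradiction by comparing $2Cn^2\binom{2k}{k}$ against $\binom{2k+3r/4}{k+r-1}\ge 1.5^{r}\binom{2k}{k}> n^3\binom{2k}{k}$; no isoperimetric expansion of $|N(\mathcal{A})|$ is needed for this direction). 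The problem is your final counting step, which does not work as stated. First, the ground set from which $\F$ is drawn is $\binom{[n]}{k}$, of size $\binom{n}{k}=\frac{n}{k}\binom{n-1}{k-1}\approx 2\binom{n-1}{k-1}$, so the binomial sum over families of size at most $(1-\alpha)\binom{n-1}{k-1}$ is of order $2^{(2-o(1))\binom{n-1}{k-1}}$, far above the target. Second, even if you first restrict to subsets of a fixed maximal intersecting family (ground set of size $m\le\binom{n-1}{k-1}$), the claimed estimate $\sum_{i\le(1-\alpha)m}\binom{m}{i}\le 2^{(1-c\alpha^2)m}$ is false for $\alpha<1/2$: the sum already contains all terms with $i\le m/2$ and hence is at least $2^{m-1}$. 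An upper bound on $|\F|$ of the form $(1-\alpha)\binom{n-1}{k-1}$ with $\alpha$ bounded away from $1/2$ therefore buys essentially no saving in the count.

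The quantity that must be bounded is not $|\F|$ but the size of every \emph{maximal} intersecting family $\F^*$ containing $\F$. The paper shows that any such $\F^*$ has deficiency $\ell:=\binom{n-1}{k-1}-|\F^*|>n\binom{2k}{k}$ (this is where the removal lemma is used, applied to $\F^*$), and then the count is
\[
\sum_{\ell>n\binom{2k}{k}}|M_\ell|\cdot 2^{\binom{n-1}{k-1}-\ell}\le 2^{\frac{1}{2}n\binom{2k}{k}}\cdot 2^{\binom{n-1}{k-1}-n\binom{2k}{k}},
\]
where $M_\ell$ is the set of maximal families of deficiency $\ell$ and $\sum_\ell|M_\ell|\le 2^{\frac12 n\binom{2k}{k}}$ is the Bollob\'as set-pairs bound from \cite[Proposition 2.2]{balogh2015intersecting}. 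The exponential saving $2^{-\ell}$ comes from the fact that a maximal family of size $\binom{n-1}{k-1}-\ell$ has exactly that many subfamilies --- not from an entropy bound on the number of small subsets. You mention the Bollob\'as bound only as an aside (``an even cleaner count''), but it is the load-bearing step; without it, and without transferring the deficiency estimate from $\F$ to $\F^*$, the argument does not close.
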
 

\begin{proof}
Recall that an intersecting family is \textit{maximal}, if no additional set can be added without destroying the intersecting property.
For each $\ell\in \mathbb{N}$, let $M_{\ell}$ be the collection of maximal non-trivial $k$-uniform intersecting families of size exactly $\binom{n-1}{k-1}-\ell$.
Hilton and Milner \cite{HiltonMilnerStability} proved that if $n\geq 2k+1$ and $\mathcal{F}$ is a non-trivial $k$-uniform intersecting family, then $|\mathcal{F}|\leq \binom{n-1}{k-1}-\binom{n-k-1}{k-1}+1$. Therefore, we can assume that $\ell\ge \binom{n-k-1}{k-1}-1$, as otherwise $|M_{\ell}|=0$. 

Fix an arbitrary intersecting family $\F$ satisfying (\ref{case:hughcomp}). 
Let $\F^*$ be the maximal intersecting family defined in Section \ref{subsec:relation} with $\F\subseteq \F^*$ and minimum $|\F^*\Delta \mathcal{S}_{\F^*}|$.
We will show that 
\[
\F^*\in M_{\ell}\quad \text{for  some }\ell > n\binom{2k}{k}.
\]

Assume for contradiction that $\ell \leq n \binom{2k}{k}$. 
Set $\alpha:=\ell\cdot \binom{n-1}{k-1}^{-1}$. Then we have
\[
\begin{split}
\alpha
&\leq n\binom{2k}{k}\binom{n-1}{k-1}^{-1}=\frac{n(k+r)}{k}\frac{(k+r-1)\cdot\ldots\cdot (k+1)}{(2k+r-1)\cdot\ldots\cdot (2k+1)}
\leq \frac{n(k+r)}{k}\left(\frac{2}{3}\right)^{r-1}
\leq \frac{n-2k}{(20C)^2 n}
\end{split}
\]
for some constant $C$, where the last two inequalities follow from the range of $r$, see~(\ref{rrange}). Note that the last term in the above inequality is a requirement for Lemma~\ref{lem: D&T}.
Applying Lemma~\ref{lem: D&T} on $\F^*$ with $\alpha$, we obtain that

\[
 |\mathcal{F}^*\Delta\mathcal{S_{\mathcal{F}^*}}|\leq C\alpha\frac{n}{n-2k}\binom{n-1}{k-1} \leq Cn\ell \leq Cn^2\binom{2k}{k},
\]
where $\mathcal{S}_{\mathcal{F}^*}$ is the star consisting of all $k$-sets of $[n]$ containing the most frequent element of $\F^*$. 
Note by the definition of $\phi$ that $\phi(\F)\subseteq\phi(\F^*)$, and both of them are independent sets in the $(d, \frac{k+r}{k}\cdot d)$-regular graph $\HH$. 
It follows that
\begin{equation}\label{eq:ubound_closure}
\begin{split}
\left|[\phi(\F) \cap \L_{k+r-1}]\right|&
\leq \frac{k+r}{k}\left|N(\phi(\F)\cap \L_{k+r-1})\right|\leq \frac{k+r}{k}\left|N(\phi(\F^{*})\cap \L_{k+r-1})\right|\\&\leq 2\left|\L_{k-1} - \phi(\F^{*})\right| 
\leq 2\left|\mathcal{F^{*}}\Delta\mathcal{S}_{\mathcal{F}^*}\right| \leq 2Cn^2\binom{2k}{k}.
\end{split}
\end{equation}

\noindent For the lower bound, by the assumption~(\ref{case:hughcomp}), we have
\[
\begin{split}
\left|[\phi(\F)\cap \mathcal{L}_{k+r-1}]\right|
&> \binom{2k+ 3r/4}{k+r-1}
=\binom{2k}{k}\prod_{j=1}^{r-1}\frac{2k+j}{k+j}\prod_{j=\lfloor 3r/4\rfloor+1}^{r-1}\frac{k-r+1+j}{2k+j}\\
&\geq \binom{2k}{k}\left(\frac{2k+r}{k+r}\right)^{r-1}\left(\frac{k-r/4}{2k+3r/4}\right)^{r/4}.
\end{split}
\]


\noindent Since $r\leq 2 + 2\sqrt{k\ln k}$ and $k$ is sufficiently large, we have $1.9\leq (2k+r)/(k+r)$ and $0.4\leq (k-r/4)/(2k+3r/4)$. Then it follows that
\[
\left|[\phi(\F)\cap \mathcal{L}_{k+r-1}]\right|\geq \binom{2k}{k}1.9^{r-1}\cdot 0.4^{r/4}>\binom{2k}{k}1.5^{r}>  \binom{2k}{k}n^3,
\]
where the last inequality uses $r\geq 100 \ln k$. This contradicts (\ref{eq:ubound_closure}), and then shows $\ell> n\binom{2k}{k}$.

Therefore, each intersecting family with property~(\ref{case:hughcomp}) is contained in some $\mathcal{F}^*\in\cup_{\ell> n\binom{2k}{k}}M_{\ell}$. 
From~\cite[Proposition 2.2]{balogh2015intersecting}, we know that $\sum_{\ell} |M_{\ell}|\leq 2^{\frac{1}{2}n\binom{2k}{k}}$.
Hence, the number of such intersecting families is at most
\[
\sum_{\binom{n-1}{k-1}\ge\ell>n \binom{2k}{k}}|M_{\ell}|\cdot2^{\binom{n-1}{k-1}-\ell}\leq 2^{\binom{n-1}{k-1}}\sum_{\ell> n\binom{2k}{k}}|M_{\ell}|2^{-\ell}\leq 2^{\binom{n-1}{k-1}-\frac{1}{2}n\binom{2k}{k}},
\]
as desired.

\end{proof}

\section{Independent sets with small components}
The next theorem will be used to bound the number of families that remain to be counted by Lemma \ref{claim:componentsaresmall}. The proof uses ideas from~\cite{kahn2019number} and~\cite{sapozhenko1987number}.
\label{sec:smallcomp}
\begin{thm}\label{thm:indset}
Let $\mathcal{I}$ be the collection of independent sets $I$ in $\mathcal{H}$ with $I\cap \mathcal{L}_{k+r-1}\neq \emptyset$ and $|[I\cap \mathcal{L}_{k+r-1}]| \leq \binom{2k + 3r/4}{k+r-1}$. Then $|\mathcal{I}|\leq 2^{\binom{n-1}{k-1}-\frac{\sqrt{d}}{4k}}$.
\end{thm}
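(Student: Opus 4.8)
The plan is to bound $|\mathcal{I}|$ by summing over the ``profile'' of an independent set $I$, namely the values $a := |[I\cap\mathcal{L}_{k+r-1}]|$ and $g := |N(I\cap\mathcal{L}_{k+r-1})|$, together with the $2$-linked component structure of $[I\cap\mathcal{L}_{k+r-1}]$. First I would reduce to a single $2$-linked component: decompose $A := I\cap\mathcal{L}_{k+r-1}$ into its $2$-linked components $A_1,\dots,A_m$; by Corollary~\ref{cor:numlinkset} (applied with the biregular parameters of $\HH$, so $\ln(rs) = O(r\ln(k/r)) = O(\ln d)$) the number of ways to choose each component of size $\ell_i$ through a fixed vertex is $\exp(O(\ell_i\ln d))$, and Proposition~\ref{prop:decom} controls the composition $\langle\ell_1,\dots,\ell_m\rangle$. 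The closures of distinct components have disjoint neighborhoods, so $a = \sum|[A_i]|$ and $g = \sum|N(A_i)|$ add up; this lets me fix the per-component containers from Theorem~\ref{containerlemmalayer} and take products.

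Next, for each component profile $(a_i,g_i)$ with $a_i \le \binom{2k+3r/4}{k+r-1}$, I apply Theorem~\ref{containerlemmalayer} with suitably chosen $\varphi,\psi$ (of order $d/\mathrm{polylog}(d)$, say) to get a container $(S_i,F_i)$ with $[A_i]\subseteq S_i$, $F_i\subseteq N(A_i)$, and $|S_i|\le \frac{k+r}{k}|F_i| + (\text{small error})$. The number of containers is $|\mathcal{L}_{k-1}|\exp(o(g_i/\mathrm{polylog}))$-ish. Crucially, once $S := \bigcup S_i$ is fixed, the part of $I$ inside $\mathcal{L}_{k-1}$ must avoid $N(A)$, hence lies in $\mathcal{L}_{k-1}\setminus F$ where $F := \bigcup F_i$; this contributes a factor $2^{|\mathcal{L}_{k-1}| - |F|}$, while the choice of $A$ itself (inside $S$) contributes at most $2^{|S|}$. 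Using (ii) from the container theorem, $|S| - |F| \le \frac{r}{k}|F| + (\text{error}) = O\!\left(\frac{r}{k}|\mathcal{L}_{k-1}|\right) + (\text{error})$ only when $F$ is large, which is the bad case; the win comes from the isoperimetric inequality Theorem~\ref{thm:iso}, which forces $g = |N(A)|$ to genuinely exceed $\frac{k}{k+r}a$ by a multiplicative factor, so that $\frac{k+r}{k}g - a = t$ is bounded below by roughly $\frac{r}{k}\cdot\frac{k+r}{k}g$ (using part (i) with $c$ of order $r$, or part (ii) when $a$ is small), and this surplus $t$, fed back into the container error terms and the counting exponent $|\mathcal{L}_{k-1}| - |F| + |S| \le |\mathcal{L}_{k-1}| - g + \frac{k+r}{k}g + (\text{error}) = |\mathcal{L}_{k-1}| + t + (\text{error})$, must be beaten. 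So the real inequality to establish, summed over profiles, is that the entropy cost $t + (\text{container error in terms of }g,t,d)$ is always at most $-\frac{\sqrt d}{4k} - \Omega(\ell\ln d)$ (the last term absorbing the component-counting factors), i.e.\ that the isoperimetric gain dominates.

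The main obstacle will be exactly this final optimization: choosing $\varphi$ and $\psi$ so that all five error terms in $|\mathcal{W}(a,g)|$ — which are of the form $\frac{g\ln^2 d}{\varphi d}$, $\frac{t\ln^2 d}{d(d-\varphi)}$, $\frac{t\ln^2 d}{\varphi d}$, $\frac{t\ln d}{(d-\varphi)\psi}$, $\frac{t\ln d}{(d-\psi)\psi}$ — together with the term $\frac{\psi t}{d}\bigl(\frac{1}{d-\psi}+\frac{1}{\frac{k+r}{k}d-\psi}\bigr)$ from (ii) and the $\exp(O(\ell\ln d))$ from Corollary~\ref{cor:numlinkset}, are all $o$ of the isoperimetric surplus coming from Theorem~\ref{thm:iso}. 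I expect one needs to split into two regimes: (a) $a$ small (say $a\le d^3$), where Theorem~\ref{thm:iso}(ii) gives $g \ge d a/(5e)^r$, an enormous multiplicative gap, so $t$ is huge and easily swamps the errors; and (b) $a$ in the intermediate range up to $\binom{2k+3r/4}{k+r-1}$, where Theorem~\ref{thm:iso}(i) with $c \asymp r$ gives $g/a \ge (1 + \Theta(r/k))^r \ge \exp(\Omega(r^2/k))$, so $t \ge \Omega(r^2 g/k)$, and one must check $r^2 g/k$ beats $g\ln^2 d/(\varphi d)$ etc.\ after the optimal choice $\varphi,\psi \asymp d/\ln d$ or similar — here $\ln d = \Theta(r\ln(k/r))$, so $r^2/k$ versus $r^2\ln^2(k/r)/(\varphi d/d) = r^2\ln^2(k/r)\cdot(\ln d/d)$ works provided $\ln^3 d / d = o(1/k)$, which holds since $d \ge (k/r)^r$ is super-polynomial in $k$ in our range $r \ge 100\ln k$. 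Summing the resulting bound $2^{|\mathcal{L}_{k-1}| - \Omega(\sqrt d/k) - \Omega(\ell\ln d)}$ over all $\ell$, all compositions, and all compatible $(a,g)$ (at most $\mathrm{poly}(d)$ choices each, absorbed) yields $|\mathcal{I}| \le 2^{\binom{n-1}{k-1} - \sqrt d/(4k)}$, completing the proof.
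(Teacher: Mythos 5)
Your high-level architecture matches the paper's: decompose $I\cap\mathcal{L}_{k+r-1}$ into $2$-linked components, control the profile with Proposition~\ref{prop:decom} and Corollary~\ref{cor:numlinkset}, apply Theorem~\ref{containerlemmalayer} per component, and let Theorem~\ref{thm:iso} supply the gain. But there is a genuine gap in the central counting step, and it is a sign problem that no choice of $\varphi,\psi$ can repair. You bound the number of choices of $A$ inside a container by $2^{|S|}$, so your exponent becomes $|\mathcal{L}_{k-1}|-|F|+|S|\le |\mathcal{L}_{k-1}|+\frac{r}{k}g+(\text{error})$, which you rewrite as $|\mathcal{L}_{k-1}|+t+(\text{error})$ and then ask to be ``beaten'' by the isoperimetric surplus. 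Since $t\ge 0$ always (indeed isoperimetry forces $t$ to be \emph{large}, of order $\frac{r^2}{k}gd$), a larger surplus makes your bound worse, not better: the inequality you reduce to, $t+(\text{error})\le -\sqrt{d}/(4k)-\Omega(\ell\ln d)$, is vacuously false. The container conclusion $|S|\le\frac{k+r}{k}|F|+\cdots$ only guarantees $|S|\lesssim\frac{k+r}{k}g>g$, so $2^{|S|}\cdot 2^{|\mathcal{L}_{k-1}|-g}$ already exceeds $2^{|\mathcal{L}_{k-1}|}$, and the isoperimetric gain cannot enter with the right sign through a $2^{|S|}$-type bound.

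What is missing is the finer reconstruction of $A_i$ from its certificate $(S_i,F_i)$, which is where the paper actually spends its effort in the large case $g_i\ge d^3$. One fixes a reference set $A^*$ attached to the certificate, specifies $G_i\cap G^*$ at cost $2^{g_i-f_i}$, specifies a minimum cover $Y\subseteq S_i\setminus A^*$ of $G_i\setminus G^*$ at cost $\sum_j\binom{s_i-a_i}{j}$ (this is the only place where $|S_i|\le\frac{k+r}{k}|F_i|+\cdots$ is really used, via $s_i-a_i\le 2g_i$ and the range $j\lesssim\frac{2r}{k}g_i$), recovers $[A_i]$ from $G_i$, and finally pays $2^{a_i-j}$ for $A_i\subseteq[A_i]$. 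The dominant term is then $2^{a_i}$, not $2^{|S_i|}$, and Theorem~\ref{thm:iso}(i) enters with the correct sign as $a_i= g_i/(1+y/k)$ with $y\ge r^2/10$, i.e.\ $a_i\le g_i\left(1-\frac{8\log d}{k}\right)$, while the remaining terms total $O\left(\frac{r\log k}{k}g_i\right)\le\frac{7\log d}{k}g_i$; this is what yields $|\G(a_i,g_i)|\le 2^{g_i(1-\log d/k)}$ and hence the theorem after combining with the exact factor $2^{\binom{n-1}{k-1}-g}$ for the $\mathcal{L}_{k-1}$ side. Without this step (or some substitute charging roughly $2^{a}$ rather than $2^{|S|}$ for the choice of $A$), the argument does not close.
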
 

Before we move to the technical proof of Theorem~\ref{thm:indset}, we first show how this theorem will be applied to complete the proof of Theorem \ref{thm:mainthm}.

\begin{proof}[Proof of Theorem \ref{thm:mainthm}]
First by Lemma~\ref{claim:componentsaresmall}, the number of intersecting families with property~(\ref{case:hughcomp}) is $o\left(2^{\binom{n-1}{k-1}}\right)$. For the rest of the non-trivial intersecting families $\F$, they all satisfy $1\leq |[\phi(\F) \cap \mathcal{L}_{k+r-1}]| \leq \binom{2k + 3r/4}{k+r-1}$. 
From here, we instead count the independent sets $\phi(\F)$, which by Theorem~\ref{thm:indset} is at most $n2^{\binom{n-1}{k-1}-\frac{\sqrt{d}}{4k}} = o\left(2^{\binom{n-1}{k-1}}\right)$,
where $n$ refers to the number of choices of the most frequent element.
In sum, we easily have $I(n, k, \geq 1) = o\left(2^{\binom{n-1}{k-1}}\right)$. Note that the number of trivial intersecting families is at least $2^{\binom{n-1}{k-1}}$, and therefore the typical structure naturally follows from the counting result.

\end{proof}

\begin{proof}[Proof of Theorem~\ref{thm:indset}]
Let $I$ be an independent set in $\mathcal{I}$. We first define 
\[
a:=\left|[I\cap \mathcal{L}_{k+r-1}]\right| \quad\text{and}\quad g:= \left|N([I\cap \mathcal{L}_{k+r-1}])|= |N(I\cap \mathcal{L}_{k+r-1})\right|.
\]
Let $\{A_1,A_2,\ldots, A_i, \ldots\}$ be the collection of $2$-linked components of $I\cap \mathcal{L}_{k+r-1}$. By the definition of $2$-linked component, all $N(A_i)$'s are pairwise disjoint. 
Note that the set $I\cap \mathcal{L}_{k+r-1}$ is not necessarily closed; it is closed when the family $\mathcal{F}$ associated to $I$ is maximal. 
We write
\[
a_i:=|[A_i]| \quad\text{and}\quad g_i:=|N([A_i])|=|N(A_i)|.
\]
Note that for every vertex $v$ in $[I\cap \mathcal{L}_{k+r-1}]$, there must be some unique $i$ such that $N(v)\subseteq N(A_i)$, as otherwise, it violates the fact that $A_i$ is a 2-linked component. Therefore, we have $a=\sum{a_i}$ and $g=\sum g_i$. 
Moreover, by the assumption of Theorem~\ref{thm:indset}, we have that for every  $i$,
\begin{equation}\label{abound}
1\leq a_i\leq a\leq \binom{2k + 3r/4}{k+r-1}.
\end{equation}

Recall that for each independent set $I$ in $\HH$, we denote by $\{A_1, A_2,\ldots\}$ the collection of its 
$2$-linked components in $\mathcal{L}_{k+r-1}$. 
Given the collection $\{A_1, A_2,\ldots\}$, define the set $C$ as 
\[C:=\left(\bigcup{A_i}\right)\cup\left(\mathcal{L}_{k-1} - \bigcup{N(A_i)}\right).
\]
Notice that $I\subseteq C$. Instead of counting independent sets
directly, we will estimate the number of possible sets $C$ and bound the number of independent sets assigned to each $C$. In particular, 
for a given set $C$ with 2-linked components $\{A_1,A_2,\ldots\}$, the number of independent sets assigned to $C$ is exactly 
\begin{equation}\label{eq:boundindsets}
2^{|\L_{k-1} - \bigcup N(A_i)|} = 2^{\binom{n-1}{k-1} - g},
\end{equation}
as $I\cap \L_{k+r-1}$ is fixed by the set $C$ and all vertices in $\L_{k-1}$ can appear in $I$ except for those adjacent to $A_i$'s. Let $C_{g}$ denote the number of sets $C\subset V(\HH)$ that are associated to an independent set $I$ with $|N(I\cap \mathcal{L}_{k+r-1})|=g$.   We have that the number of independent sets we would like to count, $|\mathcal{I}|$, is bounded by
\begin{equation}\label{eq:Cgappears}
|\mathcal{I}|\leq \sum_{g}C_g \cdot 2^{\binom{n-1}{k-1}- g}.
\end{equation}


For a pair of integers $(a_i, g_i)$ with $1\leq a_i < g_i$ and $a_i\leq \binom{2k+3r/4}{k+r-1}$, let
\[
\G(a_i, g_i):=\left\{A_i \mid A_i\text{ is 2-linked, } |[A_i]|=a_i \text{ and } |N(A_i)|= g_i\right\}.
\]
Recall that $\HH$ is a $\left(d, \frac{k+r}{k}d\right)$-regular bipartite graph where $d=\binom{k+r-1}{k-1}$. We may thus assume $g_i\geq d$ as otherwise $|\G(a_i, g_i)|$ is trivially zero.
From now on, we fix $a_i$ and $g_i$ and obtain a bound on $|\G(a_i,g_i)|$. For this, we split the proof into two cases.

~

\noindent\textbf{Small case.} $g_i \leq d^3$. 

\noindent By Corollary~\ref{cor:numlinkset}, we have at most 
\[
|\mathcal{L}_{k+r-1}|2^{2a_i\log(d\cdot d\frac{k+r}{k})}\leq 2^{n+5a_i\log d}
\]
options for $[A_i]$ and with it fixed, $2^{a_i}$ options for $A_i$. We have 
\begin{equation}\label{eq:smallcasebound}
\begin{split}
|\G(a_i,g_i)|\leq 2^{n+5a_i\log d+a_i}
&\leq 2^{n+6a_i\log d} 
\leq  2^{n+6(5e)^rg_i\log d/d}
\leq 2^{g_i/4 + g_i/4}=2^{g_i/2},
\end{split}
\end{equation}
where the second inequality follows from Theorem~\ref{thm:iso}(ii), and the last inequality follows from $g_i\geq d$ and $d=\binom{k+r-1}{r}\geq k^2/2$.

~

\noindent\textbf{Large case.} $g_i \geq d^3$.

\noindent In this case, we will prove that
\begin{equation}\label{eq:largecasebound}
\begin{split}
|\G(a_i,g_i)|\leq 2^{g_i\left(1 - \frac{1}{k\sqrt{d}}\right)}.
\end{split}
\end{equation}
Since $a_i\leq \binom{2k+3r/4}{k+r-1}$, applying Theorem~\ref{thm:iso}(i) with $c=r/4 -2$, we have 
\begin{equation}\label{eq:glower}
g_i\geq a_i \left(1 + \frac{r/4 -2}{k + 3r/4 +1} \right) \cdot \ldots \cdot \left(1+\frac{r/4 -2}{k - r/4+2}\right) 
\geq a_i\left(1 + \frac{r^2}{8k}\right).
\end{equation}
Let $t_i=\frac{k+r}{k}g_id - a_id$. Then, for large enough $k$, we have
\begin{equation}\label{eq:tg}
\frac{t_i}{g_i}=d\left(\frac{k+r}{k} - \frac{a_i}{g_i}\right)
\geq d\left(\frac{k+r}{k} - \frac{1}{1 + \frac{r^2}{8k}}\right)
\geq \frac{dr}{k}
\geq \frac{\ln^2 d}{d^{1/2}},
\end{equation}   
where the last inequality follows from $(10k/r)^r\geq d\geq (k/r)^r$ and the range of $r$.~Applying Theorem~\ref{containerlemmalayer} with $\varphi = d/2$ and $\psi=\sqrt{d\ln d}$, we find a family $\mathcal{W}(a_i, g_i)$ such that for every $A_i\in \G(a_i, g_i)$, there exists a pair of sets $(S_i, F_i)\in \mathcal{W}(a_i, g_i)$ with
\begin{equation}\label{SFcer}
S_i\supseteq [A_i],\quad F_i\subseteq N(A_i), \quad |S_i|\leq \frac{k+r}{k} |F_i| + O\left(\frac{t_i\sqrt{\ln d}}{d^{3/2}}\right) \leq 2g_i.
\end{equation}
We first fix the set pair $(S_i, F_i)$, the number of options for it is 
\[
|\mathcal{W}(a_i, g_i)|\leq 2^n \exp\left(O\left(\frac{g_i\ln^2 d}{d^2}\right) +  O\left(\frac{t_i\sqrt{\ln d}}{d^{3/2}}\right)\right)
\leq \exp\left( O\left(\frac{t_i\sqrt{\ln d}}{d^{3/2}}\right)\right),
\]
where the second inequality follows from $g_i\geq d^3$ and (\ref{eq:tg}).
Define $x$ and $y$ real numbers such that
\begin{equation}\label{def:x,y}
g_i = \left(1+\frac{y}{k}\right)a_i, \qquad |S_i| = \left(1 + \frac{x}{k}\right)g_i.
\end{equation}
Note that we may assume
\begin{equation}\label{eq:boundOnx}
-c\frac{t_ik\sqrt{\ln d}}{d^{3/2}g_i}\leq x \leq r + c\frac{t_ik\sqrt{\ln d}}{d^{3/2}g_i},
\end{equation}
for some sufficiently large constant $c$, where the upper bound directly comes from the upper bound of $|S_i|$ on~(\ref{SFcer}) and $|F_i|\leq g_i$.
If the lower bound did not hold, we would have $|S_i|\leq g_i - c\frac{t_i\sqrt{\ln d}}{d^{3/2}}$, and since $A_i\subseteq S_i$, we then easily have
\[
|\G(a_i, g_i)|\leq \sum_{(S_i, F_i)\in \mathcal{W}(a_i, g_i)} \binom{|S_i|}{\leq a_i} \leq \exp\left( O\left(\frac{t_i\sqrt{\ln d}}{d^{3/2}}\right)\right) 2^{g_i - c\frac{t_i\sqrt{\ln d}}{d^{3/2}}}
\leq 2^{g_i\left(1 - \frac{1}{k\sqrt{d}}\right)},
\]
where the last inequality follows from~(\ref{eq:tg}), which would complete the proof.

Now, take a certificate $(S_i,F_i)$ and a set $A^*$ with $|[A^*]|=a_i$ and $|N(A^*)|=g_i$ associated to the certificate. 
For ease of notation, let $s_i:=|S_i|$ and $f_i:=|F_i|$.
For each element in $G^*\setminus F_i$ where $G^*=N(A^*)$, choose if it is included in $G_i$, so the cost of specifying $G_i\cap G^*$ is $2^{g_i-f_i}$. Since $G_i\setminus G^*\subset N(A_i\setminus A^*)$, there exists a set $Y\subseteq A_i\setminus A^*$ that is a minimum cover for $G_i\setminus G^*$ and define $j:=|Y|$. The cover can be chosen from $S_i\setminus A^*$ in at most $\sum_{j=0}^{g_i-f_i}\binom{s_i-a_i}{j}$ ways. With the vertex cover $Y$ and $G_i\cap G^*$ given, we obtain $G_i$ and with it, $[A_i]$. Now we only have to determine which elements of $[A_i]\setminus Y$ belong to $A_i$.  In total, there are at most 
\begin{equation}\label{eq:defN}
N:= 2^{g_i-f_i}\sum_{j=0}^{g_i-f_i}\binom{s_i-a_i}{j}2^{a_i-j}
\end{equation}
ways to determinate $A_i$. We use \eqref{def:x,y} to get the bound

\begin{equation}\label{eq:boundgf}
g_i-f_i=\frac{k}{k+r}s_i-f_i+\frac{r-x}{k+r}g_i\leq \frac{r-x}{k+r}g_i + O\left(\frac{t_i\sqrt{\ln d}}{d^{3/2}}\right),
\end{equation}
where the last inequality follows from \eqref{SFcer}. For the error term, since $r<k$, we have $t_i=\frac{k+r}{k}g_id -a_id \leq 2g_id$, and then
\begin{equation}\label{eq:errorest}
   O\left(\frac{t_i\sqrt{\ln d}}{d^{3/2}}\right) < \frac{g_i}{d^{0.1}}.
\end{equation} 
We can upper bound $N$ by ignoring the last $2^{-j}$ term in \eqref{eq:defN} and then use  \eqref{eq:boundgf}, to get

\begin{equation}\label{eq:Nbound1}
N \leq 2^{\frac{r-x}{k+r}g_i + \frac{g_i}{d^{0.1}} + a_i} \sum_{j=0}^{\frac{r-x}{k+r}g_i + \frac{g_i}{d^{0.1}}}\binom{s_i-a_i}{j}.
\end{equation}
Using~(\ref{eq:errorest}) and \eqref{eq:boundOnx}, we have $x\geq -k/d^{0.1}$. Since \eqref{eq:lowerBoundd} gives $d\geq (k/r)^r$, we have $r>k/d^{0.1}$ for large enough $k$. Using these lower bounds for $x$ and $r$ and substituting $a_{i}=\frac{k}{k+y}g_i$ in \eqref{eq:Nbound1} gives

\begin{equation}\label{eq:Nbound2}N\leq 2^{\frac{r-x}{k+r}g_i + \frac{g_i}{d^{0.1}} + \frac{k}{k+y}g_i} \sum_{j=0}^{\frac{r-x}{k+r}g_i + \frac{g_i}{d^{0.1}}}\binom{s_i-a_i}{j}\leq 2^{\frac{2r}{k}g_i + \frac{k}{k+y}g_i} \sum_{j=0}^{\frac{2r}{k}g_i}\binom{s_i-a_i}{j}. \end{equation}
Note that from \eqref{def:x,y} and \eqref{eq:glower}, it follows that 
\begin{equation}\label{eq:ybound}
    y> \frac{r^2}{10}.
\end{equation} 



From \eqref{def:x,y} it follows that $s_i-a_i = g_i + \frac{x}{k}g_i - \frac{k}{k+y}g_i$.
Now let us look at the binomial sum on the right side of \eqref{eq:Nbound2}. We have $$M:=\sum_{j=0}^{\frac{2r}{k}g_i}\binom{s_i-a_i}{j}\leq \sum_{j=0}^{\frac{2r}{k}g_i} \binom{g_i\left(1+\frac{x}{k} -\frac{k}{k+y}\right)}{j}\leq \sum_{j=0}^{\frac{2r}{k}g_i}\binom{2g_i}{j},$$
where in the last inequality we used that $x<k$ and $y>0$.
Using the known bound $\sum_{i=0}^{q}\binom{m}{i} \le (em/q)^q$ for  $q\leq m/2$, we get

$$M \leq  \left( \frac{2e}{\frac{2r}{k}}\right)^{\frac{2r}{k}g_i} =  2^{\frac{2r}{k}g_i\log(k)}.$$

\noindent Using~(\ref{eq:Nbound2}) then gives

\begin{equation}\label{eq:almostfinalboundN}
\log N \leq \frac{2r}{k}g_i + \frac{g_i}{1+\frac{y}{k}} + \frac{2r\log k}{k}g_i \leq\frac{3r\log k}{k}g_i + \frac{k}{k+y}g_i.
\end{equation}

Recall from~(\ref{eq:ybound}) that $y\geq r^2/10$. Since $r\geq 100\ln k$ and $d\leq (10k/r)^r$, we then have $ky\geq 8(k +y)\log d$, and therefore
\begin{equation}\label{eq:upperboundfracy}
\frac{k}{k+y}\leq 1-\frac{8\log  d}{k}.
\end{equation}
On the other hand, as \eqref{eq:lowerBoundd} gives $d\geq (k/r)^r$, we have  \begin{equation}\label{eq:upperboundfraclog}
    \frac{3r\log k}{k}\leq \frac{7\log d}{k},
\end{equation} 
for large enough $k$. Therefore summing  \eqref{eq:upperboundfracy} and \eqref{eq:upperboundfraclog}, from \eqref{eq:almostfinalboundN} we get 
\[
\log N\leq g_i-g_i\frac{\log d}{k}\leq g_i\left(1-\frac{1}{k\sqrt{d}}\right), 
\]
as desired.

Once we figure out the bound of each individual $|\G(a_i, g_i)|$, we can bound $C_g$ with
\[
\begin{split}
C_g &\leq \sum_{1\leq a\leq g} ~ \sum_{\substack{a=a_s+a_\ell\\ g=g_s+g_\ell}} ~ \Bigg( \sum_{\substack{a_s=\sum a_i,\  g_s=\sum g_i\\ d\leq g_i\leq d^3}}\prod_i |\G(a_i, g_i)|\sum_{\substack{a_\ell=\sum a_i,\  g_\ell=\sum g_i\\ g_i\geq d^3}}\prod_i |\G(a_i, g_i)|\Bigg)\\
&\leq \sum_{1\leq a\leq g} ~ \sum_{\substack{a=a_s+a_\ell\\ g=g_s+g_\ell}} ~ \Bigg(\sum_{\substack{a_s=\sum a_i,\  g_s=\sum g_i\\ d\leq g_i\leq d^3}} 2^{g_s/2}\sum_{\substack{a_\ell=\sum a_i,\  g_\ell=\sum g_i\\ g_i\geq d^3}} 2^{g_\ell\left(1 - \frac{1}{k\sqrt{d}}\right)}\Bigg)\\
&\leq \sum_{1\leq a\leq g} ~ \sum_{\substack{a=a_s+a_\ell\\ g=g_s+g_\ell}}2^{\frac{2g_s\log (ed)}{d}} 2^{g_s/2}\cdot 2^{\frac{2g_{\ell}\log (ed^3)}{d^3}}2^{g_{\ell}\left(1 - \frac{1}{k\sqrt{d}}\right)}\\
&\leq \sum_{1\leq a\leq g} ~ \sum_{\substack{a=a_s+a_\ell\\ g=g_s+g_\ell}}2^{g\left(1 - \frac{1}{2k\sqrt{d}}\right)}
\leq 2^{n}\cdot 2^{2n} \cdot 2^{g\left(1 - \frac{1}{2k\sqrt{d}}\right)}
\leq 2^{g - \frac{\sqrt{d}}{3k}},
\end{split}
\]
where the third inequality follows from Proposition~\ref{prop:decom}, and the last inequality follows from $\frac{g}{k\sqrt{d}}\geq \frac{\sqrt{d}}{k}\gg n$, as $g\geq d$ and $r\gg 1$.
Finally, recall from~(\ref{eq:Cgappears}) that we have
\[
|\mathcal{I}|\leq \sum_{g}C_g 2^{\binom{n-1}{k-1}-g}\leq \sum_{g}2^{\binom{n-1}{k-1}-\frac{\sqrt{d}}{3k}}
\leq 2^{\binom{n-1}{k-1}}\cdot 2^n \cdot2^{-\frac{\sqrt{d}}{3k}}
\leq 2^{\binom{n-1}{k-1}} \cdot2^{-\frac{\sqrt{d}}{4k}},
\]
which completes the proof.

\end{proof}


\section{Proof of Theorem~\ref{containerlemmalayer}}
\label{sec:contlemproof}
We will derive Theorem~\ref{containerlemmalayer} as a corollary of the following theorem.
For $1\leq \varphi\leq s-1$, let
\begin{equation}\label{def:mphi}
m_{\varphi}:=\min\{|N(K)|: y\in Y,\ K\subseteq N(y),\ |K|>\varphi\}.
\end{equation}
\begin{thm}\label{containerlemma}
Let $\Sigma$ be a $(\qq, s)$-biregular graph with bipartition $X\cup Y$. 
For integers $a$, $g$, let
\[
\mathcal{G}(a, g)=\{A\subseteq X: A \text{ is a 2-linked set},\ |[A]|=a,\ |N(A)|=g\},
\]
and set $t:=gs -a\qq$.
Let $1\leq \varphi \leq s-1$, $1\leq \psi \leq \min\{\qq, s\} -1$, 
and $C>0$ be an arbitrary number such that $C\ln \qq/(\varphi \qq)<1$. 
Then there exists a family of containers $\mathcal{W}(a, g)\subseteq 2^{X}\times 2^Y$ of size
\[
\begin{split}
|\mathcal{W}(a, g)|
&\leq |Y|\exp\left(\frac{54Cg\ln \qq\ln(\qq s)}{\varphi \qq} + \frac{54g\ln(\qq s)}{\qq^{Cm_{\varphi}/(\varphi \qq)}}+ \frac{54t\ln s\ln(\qq s)}{\qq(s - \varphi)}\right)\cdot\\
&\quad\binom{\frac{3Cgs\ln \qq}{\varphi \qq}}{\leq \frac{3Ct\ln \qq}{\varphi \qq}}\binom{gs}{\leq t/((s-\varphi)\psi)} \binom{gs\qq}{\leq t/((\qq - \psi)\psi)},
\end{split}
\]
and a function $f: \mathcal{G}(a, g) \rightarrow \mathcal{W}(a, g)$ such that for each $A\in \mathcal{G}(a, g)$, $(S, F):=f(A)$ satisfies:
\begin{itemize}
    \item[(i)] $S\supseteq [A]$, $F\subseteq G$;
    \item[(ii)] $d_F(u)\geq \qq - \psi$ for every $u\in S$;
    \item[(iii)] $d_{X\setminus S}(v)\geq s- \psi$ for every $v\in Y\setminus F$;
    \item[(iv)] $|S| \leq \frac{s}{\qq}|F|  + \frac{\psi t}{\qq}\left(\frac{1}{\qq -\psi} + \frac{1}{s -\psi}\right)$.
\end{itemize}
\end{thm}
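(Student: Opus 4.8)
\textbf{Plan of proof for Theorem~\ref{containerlemma}.}
The strategy is a biregular adaptation of Sapozhenko's container argument, run in two stages: a coarse stage that produces an approximation of $[A]$ on a multiplicative scale, and a refinement stage that pins down $S$ and $F$ so that properties (ii)--(iv) hold. Throughout, $A$ is a $2$-linked set with closure $[A]$ of size $a$, neighbourhood $G:=N(A)$ of size $g$, and the ``defect'' parameter is $t=gs-aq$; note $t\ge 0$ since $\Sigma$ is $(q,s)$-biregular and every vertex of $[A]$ sends all $q$ of its edges into $G$, while $G$ has only $gs$ edges out. The number $t$ measures how far $[A]$ is from saturating $G$, and it is the quantity that controls every error term.

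\textbf{Step 1: the approximation lemma.} First I would build, for each $A$, a small ``fingerprint'' $T\subseteq[A]$ and from it a pair $(\psi\text{-free approximations})$ as follows. Greedily select vertices of $[A]$ one at a time, each time taking a vertex that covers at least $\varphi$ currently-uncovered vertices of $G$ (i.e.\ whose restricted neighbourhood into the not-yet-covered part of $G$ has size $>\varphi$); stop when no such vertex remains. Because each selected vertex kills $\ge\varphi$ edges' worth of $G$ and $G$ has $gs$ edges, the number of selected vertices is $O(gs/(\varphi q))$ up to the $\ln q$ factors that come from a more careful weighting — this is where the $\binom{3Cgs\ln q/(\varphi q)}{\le \cdot}$ term and the $m_\varphi$ term enter: the uncovered remainder of $G$ after the greedy phase has every vertex covered at most $\varphi$ times from $[A]$, so by the definition of $m_\varphi$ its neighbourhood outside $S$ is forced to be large, and iterating this bounds the leftover. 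Let $S_0:=[T]\cup\{$ vertices forced in by the $m_\varphi$ argument $\}$; one shows $[A]\subseteq S_0$ and $|S_0|\le |A|(1+o(1))$ with the error governed by $t$. The count of possible $T$ is the product of binomial coefficients in the statement, and the count of possible ``correction sets'' is the $\exp(\cdots/q^{Cm_\varphi/(\varphi q)})$ factor.

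\textbf{Step 2: refinement to get (ii), (iii), (iv).} Starting from $(S_0, F_0)$ with $F_0:=N(A)\cap(\text{stuff determined by }T)$, I would iteratively remove from $S_0$ any vertex $u$ with $d_{F}(u)<q-\psi$ and remove from $Y\setminus F$ — equivalently add to $F$ — by a symmetric rule, so that at the end every $u\in S$ has $d_F(u)\ge q-\psi$ and every $v\in Y\setminus F$ has $d_{X\setminus S}(v)\ge s-\psi$; crucially each such move must be ``logged'' so that $(S,F)$ is reconstructible, and the number of moves is bounded by $t/((q-\psi)\psi)$ and $t/((s-\psi)\psi)$ respectively — this is exactly a double-counting of the $\le t$ ``missing edges'' at $G$, each move accounting for $\ge\psi$ of them from two sides, yielding the last two binomial factors $\binom{gs}{\le t/((s-\varphi)\psi)}$ and $\binom{gsq}{\le t/((q-\psi)\psi)}$. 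One checks $[A]\subseteq S$ survives all removals (a vertex of $[A]$ has all $q$ neighbours in $G\supseteq F$ only after we argue $F$ stays close to $G$), and $F\subseteq G$ is maintained. Finally, inequality (iv) is a clean edge-count: summing $d_F(u)\ge q-\psi$ over $u\in S$ and comparing with the $|F|s$ edges leaving $F$, together with $|G\setminus F|$ and $|S\setminus[A]|$ both being $O(t/(\cdot))$, rearranges to $|S|\le \frac{s}{q}|F|+\frac{\psi t}{q}\bigl(\frac{1}{q-\psi}+\frac{1}{s-\psi}\bigr)$.

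\textbf{Main obstacle.} The delicate point is Step~1: getting the greedy/weighted-covering count to come out with the stated $\ln q \ln(qs)$-type savings rather than a cruder polynomial-in-$g$ bound, and in particular handling the leftover of $G$ after the greedy phase via $m_\varphi$ without circularity — the definition of $m_\varphi$ refers to $|N(K)|$ for $K\subseteq N(y)$, and one must verify that the uncovered part of $G$ genuinely decomposes into such pieces attached to distinct $y$'s so that the bound multiplies correctly. The biregularity (two different degrees $q\ne s$) means one cannot symmetrise the two sides of the bipartition as in Sapozhenko's original argument, so every inequality must be tracked with the correct factor of $s/q$ or $q/s$, and the bookkeeping of which side each logged move ``charges'' is where errors are easiest to make. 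Once the approximation lemma is set up with the right weighting, Step~2 is a routine but careful instance of the standard container-refinement; derivation of Theorem~\ref{containerlemmalayer} from Theorem~\ref{containerlemma} is then just substituting $q=d$, $s=\frac{k+r}{k}d$, $\psi,\varphi$ as chosen, and bounding $m_\varphi$ below using the isoperimetry of $\HH$ (Theorem~\ref{thm:iso}).
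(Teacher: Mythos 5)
Your two-stage architecture (a coarse $\varphi$-approximation followed by a $\psi$-refinement whose moves are charged against the $t$ missing edges) matches the paper's, and your Step 2 and the edge-count for (iv) are essentially the paper's Lemma~\ref{lem:con2} and Proposition~\ref{lem:psiappro}. The genuine gap is in Step 1, in how the fingerprint is \emph{enumerated}. You assert that ``the count of possible $T$ is the product of binomial coefficients in the statement,'' but a fingerprint of size $\Theta(g\ln \qq/(\varphi \qq))$ chosen freely from $Y$ would cost roughly $\binom{|Y|}{\Theta(g\ln \qq/(\varphi\qq))}$, which is far too large. The paper avoids this by proving that the fingerprint $T=T_0\cup T_0'\cup T_1$ is an $8$-linked subset of $Y$ (inherited from the $2$-linkedness of $A$), so it can be enumerated at cost $|Y|\exp(O(t_{\mathrm{bound}}\ln(\qq s)))$ via the linked-set counting corollary; this is exactly the $|Y|\exp(\cdots\ln(\qq s))$ prefactor in the bound, and it is the one idea your outline omits entirely. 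Relatedly, the binomial factor $\binom{3C g s\ln \qq/(\varphi\qq)}{\le 3Ct\ln\qq/(\varphi\qq)}$ does not count choices of $T$: it counts the edge set $\Omega=E(T_0,X\setminus[A])$, which must be recorded so that $N_{[A]}(T_0)$ (and hence the approximation) can be reconstructed from $T_0$; this reconstruction step is also missing from your plan.

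Two smaller discrepancies: (1) you apply $m_\varphi$ to the ``uncovered remainder of $G$ covered at most $\varphi$ times,'' but the definition \eqref{def:mphi} is used in the opposite direction — for $y\in N(A)^\varphi$, i.e.\ $y$ with \emph{more} than $\varphi$ neighbours in $[A]$, one has $|N(N_{[A]}(y))|\ge m_\varphi$, so a random fingerprint $T_0$ (chosen with probability $p=C\ln\qq/(\varphi\qq)$, not greedily) misses such a $y$ with probability at most $(1-p)^{m_\varphi}$; the few missed vertices form $T_0'$ and are listed explicitly. (2) Your fingerprint lives in $[A]\subseteq X$, whereas the paper's lives in $N(A)\subseteq Y$ and outputs a set $F'$ with $N(A)^\varphi\subseteq F'\subseteq N(A)$ and $N(F')\supseteq[A]$; your variant might be salvageable, but as written you have not shown how the stated error terms (in particular the $\ln\qq\ln(\qq s)$ savings) would emerge from a purely greedy selection. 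So the proposal is a correct high-level roadmap but does not yet contain the key counting mechanism.
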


Theorem~\ref{containerlemma} is essentially a result of Sapozhenko~\cite{sapozhenko1987number}, which is originally written in Russian and now referred to as the \textit{Sapozhenko's graph container lemma}.
Here we summarize and restate his main result in terms of biregular bipartite graphs.
For the sake of completeness and for providing reference for future work, we give a self-contained proof in the Appendix.
The proof of Theorem~\ref{containerlemma} is very similar to what is in Galvin's expository note~\cite{galvin2019independent} on Sapozhenko's proof for regular bipartite graphs. 
The only difference is that our $t$ is defined to be $gs- aq$, i.e.~the number of edges from $N(A)$ to $X\setminus [A]$, while the Galvin's proof uses $t=g - a$, the difference of the set sizes, as he works on regular graphs.

\begin{proof}[Proof of Theorem~\ref{containerlemmalayer}]
We will apply Theorem~\ref{containerlemma} on the $(\qq, s)$-biregular graph $\mathcal{H}$ with bipartition $\mathcal{L}_{k+r-1}\cup\mathcal{L}_{k-1}$, where $\qq=d$ and $s=(k+r)d/k$.
First, by Theorem~\ref{thm:iso}(ii), we have $m_{\varphi}\geq C_{iso}\varphi d$ for some number $C_{iso}$, and so for large enough $d$ we may take $C=10/C_{iso}$.

By Theorem~\ref{containerlemma}, we have
\[
\begin{split}
|\mathcal{W}(a, g)|
\leq &\ |\mathcal{L}_{k-1}|\exp\left(
O\left(\frac{g\ln^2 d}{\varphi d}\right) + O\left(\frac{g\ln d}{d^{10}}\right)+ O\left(\frac{t\ln^2 d}{d(d - \varphi)}\right)\right)
\exp\left(O\left(\frac{t\ln d}{\varphi d}\ln\frac{gd}{t}\right)\right)\\
&\ \exp\left(O\left(\frac{t\ln (gd^3/t)}{(d - \varphi)\psi}\right)\right)
+ \exp\left(O\left(\frac{t\ln(gd^4/t)}{(d - \psi)\psi}\right)\right).
\end{split}
\]
By Theorem~\ref{thm:iso}(i), as $a\leq \binom{2k+3r/4}{k+r-1}$, we have $g\geq a\left(1 + \frac{r/4 - 2}{k - (r/4-2)}\right)\geq a\left(1 + \frac{r}{5k}\right)$, and then
\begin{equation}
\frac{gd}{t}=\frac{g}{\frac{k+r}{k}g -a} 
\leq \frac{1}{\frac{k+r}{k} - \frac{1}{1 + r/5k}}
=\frac{1}{1 + \frac{r}{k} -\left( 1  -\frac{r}{5k} + o\left(\frac{r}{5k}\right)\right)}
\leq O\left(\frac{k}{r}\right) \leq O\left(d^{1/r}\right).
\end{equation}
Therefore, we can further simplify the upper bound of $|\mathcal{W}(a, g)|$ to the following:
\[
\begin{split}
|\mathcal{W}(a, g)|
\leq &\ |\mathcal{L}_{k-1}|\exp\left(
O\left(\frac{g\ln^2 d}{\varphi d}\right) + O\left(\frac{g\ln d}{d^{10}}\right)+ O\left(\frac{t\ln^2 d}{d(d - \varphi)}\right)\right)
\exp\left(O\left(\frac{t\ln^2 d}{\varphi d}\right)\right)\\
&\ \exp\left(O\left(\frac{t\ln d}{(d - \varphi)\psi}\right)\right)
+ \exp\left(O\left(\frac{t\ln d}{(d - \psi)\psi}\right)\right)\\
=&\ |\mathcal{L}_{k-1}|\exp\left(
O\left(\frac{g\ln^2 d}{\varphi d}\right) 
+ O\left(\frac{t\ln^2 d}{d(d - \varphi)}\right)
+ O\left(\frac{t\ln^2 d}{\varphi d}\right)\right)\\
&\ \exp\left(O\left(\frac{t\ln d}{(d - \varphi)\psi}\right)
+ O\left(\frac{t\ln d}{(d - \psi)\psi}\right)
\right),
\end{split}
\]
which completes the proof.
\end{proof}

\section*{Acknowledgement}
We greatly thank the anonymous referees for their valuable comments and for pointing out an error in the previous version.

\bibliographystyle{abbrv} 
\bibliography{refs} 

\appendix
\section{Graph container lemma for biregular graphs}
We will use the following lemma, a special case of a fundamental result of Lov\'{a}sz~\cite{lovasz1975ratio} and Stein~\cite{stein1974two}. For a bipartite graph $\Sigma$ with bipartition $X\cup Y$, we say that $Y'\subseteq Y$ \textit{cover}s $X$ if each $x\in X$ has a neighbor in $Y'$.
\begin{lemma}\label{lem:cover}
Let $\Sigma$ be a bipartite graph with bipartition $X\cup Y$, where $|N(x)|\geq a$ for each $x\in X$ and $|N(y)|\leq b$ for each $y\in Y$. Then there exists some $Y'\subseteq Y$ that covers $X$ and satisfies
\[
|Y'|\leq \frac{|Y|}{a}\cdot (1 + \ln b).
\]
\end{lemma}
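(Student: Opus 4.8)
The plan is to prove this by the standard probabilistic argument underlying the Lov\'asz--Stein fractional-to-integral covering bound. The first, purely bookkeeping, step is to note the double edge count $a|X|\le e(\Sigma)\le b|Y|$, which gives $|X|\le b|Y|/a$; this is exactly what turns the naive $\ln|X|$ one would get from a blunt union bound into the desired $\ln b$. I would also dispose of the degenerate case $\ln b\ge a$ at the outset, where $Y'=Y$ already has size $|Y|\le \tfrac{|Y|}{a}\ln b\le \tfrac{|Y|}{a}(1+\ln b)$, and assume henceforth $\ln b<a$; in particular $a\ge 1$ and $b\ge 1$ (if $X=\emptyset$ the statement is trivial, and vertices of $Y$ of degree $0$ may be discarded).

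Next, set $p:=(\ln b)/a\in[0,1)$ and form a random set $Y_0\subseteq Y$ by including each vertex of $Y$ independently with probability $p$, so $\mathbb{E}|Y_0|=p|Y|$. For a fixed $x\in X$, since $x$ has at least $a$ neighbours in $Y$, the probability that $x$ has no neighbour in $Y_0$ is at most $(1-p)^a\le e^{-pa}=1/b$. Hence, writing $U\subseteq X$ for the set of vertices not covered by $Y_0$, linearity of expectation gives $\mathbb{E}|U|\le |X|/b\le |Y|/a$, and therefore
\[
\mathbb{E}\bigl[\,|Y_0|+|U|\,\bigr]\le p|Y|+\frac{|Y|}{a}=\frac{|Y|}{a}\bigl(1+\ln b\bigr).
\]
Consequently there is an outcome with $|Y_0|+|U|\le \tfrac{|Y|}{a}(1+\ln b)$. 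Fix such a $Y_0$, pick for each $x\in U$ one neighbour $y_x\in Y$ (which exists as $\deg x\ge a\ge 1$), and set $Y':=Y_0\cup\{y_x:x\in U\}$. By construction $Y'$ covers $X$, and $|Y'|\le |Y_0|+|U|\le \tfrac{|Y|}{a}(1+\ln b)$, as required.

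There is no genuinely hard step here; the only thing to be careful about is the case split guaranteeing $p\le 1$ together with $a,b\ge 1$, so that both the estimate $(1-p)^a\le e^{-pa}=1/b$ and the final rounding step are valid. As an alternative to the randomised argument, I could instead run the greedy covering process---repeatedly adjoining the vertex of $Y$ that covers the largest number of currently uncovered elements of $X$ for about $\lceil (|Y|/a)\ln b\rceil$ rounds, after which the number of uncovered elements has dropped by a factor $(1-a/|Y|)^{t}\le e^{-at/|Y|}$ below $1$ once the crossover point $e^{-at/|Y|}=1/b$ is reached, and then adding one vertex per remaining uncovered element---which by the same optimisation yields the identical bound $\tfrac{|Y|}{a}(1+\ln b)$; I would include whichever write-up is shorter.
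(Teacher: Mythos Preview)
Your proof is correct; this is the standard probabilistic/alteration argument behind the Lov\'asz--Stein bound, and every step is valid (the case split ensuring $p\in[0,1)$ and the double-counting estimate $|X|\le b|Y|/a$ are handled cleanly). Note that the paper does not actually prove this lemma: it simply states it as a known special case of Lov\'asz~\cite{lovasz1975ratio} and Stein~\cite{stein1974two} and uses it as a black box, so there is nothing to compare your argument against beyond observing that you have supplied the classical proof of the cited result.
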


For $\qq, s\geq 2$, let $\Sigma$ be a $(\qq, s)$-regular bipartite graph  with bipartition $X\cup Y$.
From now on, without further specification we always work on the graph $\Sigma$.

\begin{defi}
A \textit{$\varphi$-approximation} for $A\subseteq X$ is a set $F'\subseteq Y$ satisfying 
\begin{equation}
N(A)^{\varphi} \subseteq F' \subseteq N(A) \quad \text{and}\quad N(F') \supseteq [A],
\end{equation}
where
\[
N(A)^{\varphi}:=\{y\in N(A):\ |N_{[A]}(y)|>\varphi\}.
\]
\end{defi}

\begin{lemma}\label{lem:con1}
Let $\Sigma$ be a $(\qq, s)$-regular bipartite graph with bipartition $X\cup Y$.
For integers $a, g$, let 
\[
\mathcal{G}=\mathcal{G}(a, g):=\{A\subseteq X: A \text{ is }2\text{-linked},\ |[A]|=a,\ |N(A)|=g\}.
\]
Let $1\leq \varphi\leq s-1$, and $C$ be a positive number such that $C\ln \qq/(\varphi \qq)<1$. Let $t:=gs -a\qq.$
Then there exists a family $\mathcal{A}_1\subseteq 2^Y$ of size
\begin{equation}\label{certi1}
|\mathcal{A}_1|\leq |Y|\exp\left(\frac{54Cg\ln \qq\ln(\qq s)}{\varphi \qq} + \frac{54g\ln(\qq s)}{\qq^{Cm_{\varphi}/(\varphi \qq)}}+ \frac{54t\ln s\ln(\qq s)}{\qq(s - \varphi)}\right)\binom{\frac{3Cgs\ln \qq}{\varphi \qq}}{\leq \frac{3Ct\ln \qq}{\varphi \qq}},
\end{equation}
and a map $\pi_1: \mathcal{G} \rightarrow \mathcal{A}_1$ for which
$\pi_1(A):=F'$ is a $\varphi$-approximation of $A$.
\end{lemma}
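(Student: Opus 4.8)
The plan is to follow Sapozhenko's polymer/container strategy, adapted to the biregular setting. Given $A \in \mathcal{G}(a,g)$, I want to produce a $\varphi$-approximation $F'$ of $A$, i.e. a set with $N(A)^{\varphi} \subseteq F' \subseteq N(A)$ and $N(F') \supseteq [A]$, and to do so in a way that depends only on a small amount of information about $A$, so that the total number of possible outputs $F'$ is bounded by the claimed quantity. The key point is that $N(A)^{\varphi}$ — the ``high-degree'' part of the neighbourhood — already covers most of $[A]$, so $F'$ is essentially determined once we can reconstruct $N(A)^{\varphi}$ together with a few correction vertices handling the ``low-degree'' part.

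First I would apply the Lov\'asz--Stein covering lemma (Lemma~A.4) inside a suitable auxiliary bipartite graph to extract a small \emph{cover} $T \subseteq N(A)$ of $[A]$: since every $x \in [A]$ has all $\qq$ of its neighbours in $N(A)$ and every $y \in Y$ has at most $s$ neighbours, there is a cover of size roughly $\tfrac{g}{\qq}(1 + \ln s)$. However, a plain cover is too large to specify vertex-by-vertex, so the next step is to further compress $T$: I would pass to $N(A)^{\varphi}$, noting that the vertices of $[A]$ \emph{not} covered by $N(A)^{\varphi}$ all have many neighbours outside $[A]$, and there are few of them because each contributes $\geq \qq - \varphi$ edges to the ``excess'' $t = gs - a\qq$ counting edges from $N(A)$ to $X \setminus [A]$. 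Concretely, the number of such uncovered vertices is $O(t/(s-\varphi))$, and one handles them with a secondary cover $T'$ of size $O\!\big(\tfrac{t}{\qq}\cdot\tfrac{\ln s}{s-\varphi}\big)$ whose specification costs the $\binom{gs}{\le \,\cdot}$-type factor. Then I would take $F' := N(T')^{\varphi} \cap N(A)$ or a similar closure, iterating the approximation once more if needed to absorb the error terms of the form $g/\qq^{Cm_{\varphi}/(\varphi \qq)}$, which arise exactly as in Sapozhenko's argument from bounding how much of $N(A)^{\varphi}$ fails to be ``seen'' after one round; here the isoperimetric input $m_{\varphi} \geq C_{\mathrm{iso}} \varphi d$ (Theorem~\ref{thm:iso}(ii)) is what makes that error term small.

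Counting the outputs: $F'$ is determined by (a) one anchor vertex of $Y$, contributing the $|Y|$ factor; (b) the small cover data, contributing the binomial factor $\binom{3Cgs\ln \qq/(\varphi \qq)}{\le 3Ct\ln \qq/(\varphi \qq)}$ together with the $\exp(\cdot)$ factors that record which neighbourhoods of the chosen cover vertices lie in $N(A)$; and (c) lower-order correction data absorbed into the remaining exponential terms. Multiplying these gives precisely the bound in \eqref{certi1}. The main obstacle I anticipate is the bookkeeping in step (b)--(c): one must set up the auxiliary graphs so that the Lov\'asz--Stein bound yields covers of the stated sizes \emph{and} simultaneously ensure that reconstructing $N(A)^{\varphi}$ from the cover costs only $\exp\!\big(O(\tfrac{t\ln s\ln(\qq s)}{\qq(s-\varphi)})\big)$ and $\exp\!\big(O(\tfrac{Cg\ln\qq\ln(\qq s)}{\varphi\qq})\big)$ — i.e. matching the ``number of edges to specify'' against the ``entropy per edge.'' Getting the constants and the dependence on $\varphi$ versus $s-\varphi$ to line up, while keeping the $\qq \neq s$ asymmetry tracked correctly (this is the only place the biregularity, as opposed to regularity, genuinely intervenes), is the delicate part; everything else is a faithful transcription of Sapozhenko's original proof.
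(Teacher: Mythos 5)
You have the right target (Sapozhenko's two--stage approximation) and you correctly identify $t=gs-aq$ as the edge excess, the role of $m_\varphi$ and the isoperimetry, and the size $O\bigl(t\ln s/(q(s-\varphi))\bigr)$ of the secondary cover. But the proposal omits the two mechanisms that make the count work, and it attributes the factors in \eqref{certi1} to the wrong objects. First, you never give a way to \emph{encode} $N(A)^{\varphi}$. The paper does this with a random sparse seed: each $y\in N(A)$ is kept independently with probability $p=C\ln q/(\varphi q)$, and a first--moment argument yields $T_0\subseteq N(A)$ with $|T_0|\le 3gp$, $e(T_0,X\setminus[A])\le 3tp$, and $|N(A)^{\varphi}\setminus N(N_{[A]}(T_0))|\le 3ge^{-pm_\varphi}$. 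The pair $(T_0,\Omega)$ with $\Omega:=E(T_0,X\setminus[A])$ determines $N_{[A]}(T_0)$ and hence $N(N_{[A]}(T_0))$, which recovers all of $N(A)^{\varphi}$ except an exceptional set $T_0'$ of size $3ge^{-pm_\varphi}$; this is where the term $g/q^{Cm_\varphi/(\varphi q)}$ and, crucially, the binomial factor $\binom{3gps}{\le 3tp}$ come from --- the binomial is the cost of specifying $\Omega$, not of any cover. Your construction instead begins with a Lov\'asz--Stein cover of all of $[A]$ of size $g(1+\ln s)/q$, which is never needed and is itself too expensive even to enumerate (in the application $\varphi=d/2$, so the budget $g\ln q\ln(qs)/(\varphi q)\approx g\ln^2 d/d^2$ is a factor of $d$ smaller than what a linked set of size $g\ln s/q$ would cost), and then ``passes to $N(A)^{\varphi}$'' and sets $F':=N(T')^{\varphi}\cap N(A)$, which is not well defined and does not explain how the receiver of the certificate reconstructs $F'$.

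Second, the $|Y|\exp(\cdot)$ factor does not come from ``one anchor vertex'': it comes from the fact (Claim~\ref{claim:Tlink}) that $T:=T_0\cup T_0'\cup T_1$ is an $8$-linked subset of $Y$, so it can be enumerated at cost $|Y|\exp(16|T|\ln(qs))$ via Corollary~\ref{cor:numlinkset}. Without a linkedness statement for whatever seed you use, specifying a subset of $Y$ of that size costs roughly $|T|\log|Y|\approx k|T|$ bits, vastly exceeding the $|T|\ln(qs)$ permitted by \eqref{certi1}. Both ingredients --- the random seed with its external--edge certificate, and the linkedness of the seed --- are the core of the proof and must be supplied. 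The remainder of your outline (the secondary cover $T_1$ of $[A]\setminus N(L)$ obtained from Lov\'asz--Stein, bounded via $|N(A)\setminus N(A)^{\varphi}|\le t/(s-\varphi)$) is correct, except that the ``few'' objects are the vertices of $N(A)\setminus N(A)^{\varphi}$ on the $Y$-side, not uncovered vertices of $[A]$: every vertex of $[A]$ has all $q$ of its neighbours inside $N(A)$ by definition of the closure.
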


\begin{proof}
Fix an arbitrary set $A\in\mathcal{G}$, and let $p=\frac{C\ln \qq}{\varphi \qq}$.
\begin{claim}
There is a set $T_0\subseteq N(A)$ such that
\begin{equation}\label{eq:T0bound}
|T_0|\leq 3gp,
\end{equation}
\begin{equation}\label{eq:Obound}
e(T_0, X\setminus [A]) \leq 3tp,
\end{equation}
and
\begin{equation}\label{eq:T0'bound}
|N(A)^{\varphi}\setminus N(N_{[A]}(T_0))|\leq 3g\exp(-pm_{\varphi}).
\end{equation}
\end{claim}
\begin{proof}
Construct a random subset $\mathbf{\tilde{T}}\subseteq N(A)$ by taking each $y\in N(A)$  independently with pro\-bability $p$. It is easy to observe that 
\[
\mathbb{E}(|\mathbf{\tilde{T}}|)=gp
 \quad \text{and} \quad
 \mathbb{E}(e(\mathbf{\tilde{T}}, X\setminus [A]))=e(N(A), X\setminus [A])p=(gs - a\qq)p=tp.
\]
By the definition of $m_{\varphi}$ (see~(\ref{def:mphi})), for every $y\in N(A)^{\varphi}$ we have $|N(N_{[A]}(y))|\geq m_{\varphi}$. Therefore, we have
\[
\begin{split}
\mathbb{E}(N(A)^{\varphi}\setminus N(N_{[A]}(\mathbf{\tilde{T}})))
&=\sum_{y\in N(A)^{\varphi}}P(y\notin N(N_{[A]}(\mathbf{\tilde{T}})))
=\sum_{y\in N(A)^{\varphi}}P(\mathbf{\tilde{T}}\cap N(N_{[A]}(y))=\emptyset)\\
&\leq g(1 - p)^{m_{\varphi}}
\leq g\exp(-pm_{\varphi}).
\end{split}
\]
Applying Markov's inequality, we can find a set $T_0\subseteq N(A)$ satisfying the desired conditions.
\end{proof}
\noindent Define
\begin{equation}\label{def:sets}
T'_0:=N(A)^{\varphi}\setminus N(N_{[A]}(T_0)), 
\quad
L:= T'_0\cup N(N_{[A]}(T_0)),
\quad
\Omega:= E(T_0, X\setminus [A]).
\end{equation}
Let $T_1\subseteq N(A)\setminus L$ be the minimal set that covers $[A]\setminus N(L)$ in the graph $\Sigma([A]\setminus N(L),  N(A)\setminus L)$.
Let $F':= L\cup T_1$, by definition, $F'$ is a $\varphi$-approximation of $A$. 

Next, we study the properties of the sets defined in~(\ref{def:sets}).
First note that $L\supseteq N(A)^{\varphi}$, and then we have 
\[
|N(A)\setminus L|(s - \varphi)\leq e(N(A), X\setminus [A])=t.
\]
Together with Lemma~\ref{lem:cover}, we obtain that
\begin{equation}\label{eq:T1bound}
|T_1|\leq \frac{|N(A)\setminus L|}{\qq}(1 + \ln s)
\leq \frac{3t\ln s}{\qq(s - \varphi)}.
\end{equation}
Let $T:=T_0\cup T'_0\cup T_1$. By (\ref{eq:T0bound}), (\ref{eq:T0'bound}) and (\ref{eq:T1bound}), we have 
\begin{equation}\label{eq:Tbound}
|T|\leq 3gp  + 3g\exp(-pm_{\varphi}) + \frac{3t\ln s}{\qq(s - \varphi)}
= \frac{3Cg\ln \qq}{\varphi \qq} + \frac{3g}{\qq^{Cm_{\varphi}/(\varphi \qq)}}+ \frac{3t\ln s}{\qq(s - \varphi)}:=t_{bound}.
\end{equation}
We also have the following claim.
\begin{claim}\label{claim:Tlink}
$T$ is an $8$-linked subset of $Y$.
\end{claim}
\begin{proof}
We start with an easy argument that $[A]$ is 2-linked. First observe that for every two vertices $u, v\in [A]$, there exists two vertices $u', v'\in A$ such that $d(u, u'), d(v, v')\leq 2$. Since $A$ is a 2-linked set, hence $v, v'$ are 2-linked in $A$, and therefore in $[A]$. Thus, $u$ and $v$ are 2-linked in $[A]$.

Next we show that $F'$ is 4-linked. 
Let $u, v$ be two distinct vertices in $F'$.
First, as $F'\subseteq N(A)=N([A])$, there exists two vertices $u', v'\in [A]$ such that $u\sim u'$ and $v\sim v'$.
Moreover, since $[A]$ is a 2-linked set, then $u'$ and $v'$ are 2-linked in $[A]$, that is, there exists a sequence $u'=v'_1, v'_2,\ldots, v'_{\ell-1}, v'_{\ell}=v'$ in $[A]$ such that $d(v'_i, v'_{i+1})\leq 2$ for each $i\in [\ell-1]$.
Recall that $N(F')\supseteq [A]$. Then for each $v'_i$, where $2\leq i\leq \ell-1$, there exists a vertex $v_i\in F'$ such that $v'_i\sim v_i$. Hence, we obtain a sequence $u=v_1, v_2, \ldots, v_{\ell-1}, v_{\ell}=v$ in $F'$ with $d(v_i, v_{i+1})\leq d(v'_i, v'_{i+1}) + 2\leq 4$ for each $i\in [\ell-1]$, which indicates that $u, v$ are 4-linked.

Now we are ready to prove the claim. First observe that $T\subseteq F'$ and for every $v'\in F\setminus T$, there exists a vertex $v\in T$ such that $d(v, v')\leq 2$.
Let $u, v$ be two distinct vertices in $T$. Since $F'$ is 4-linked, then there exists a sequence $u=v'_1, v'_2, \ldots, v'_{\ell-1}, v'_{\ell}=v$ in $F'$ with $d(v'_i, v'_{i+1})\leq 4$ for each $i\in [\ell-1]$.
Let $v_i$ be the vertex in $T$ with $d(v_i, v'_i)\leq 2$. Then we obtain a sequence $u=v_1, v_2, \ldots, v_{\ell-1}, v_{\ell}=v$ in $T$ with $d(v_i, v_{i+1})\leq d(v_i, v'_i) + d(v'_i, v'_{i+1}) + d(v'_{i+1}, v_{i+1})\leq 2+4+2=8$ for each $i\in [\ell-1]$, which indicates that $u, v$ are 8-linked.
\end{proof}

Observe that $T_0$ and $\Omega$ together determine $N(N_{[A]}(T_0))$. Therefore the set $F'$ is uniquely determined by the set tuple $(T_0, T'_0, T_1, \Omega)$. 
Let $\mathcal{A}_1$ be the collection of all sets $F'$, which are produced in such a way from some set $A\in \mathcal{G}$. Hence, to get an upper bound on the size of $\mathcal{A}_1$, it is sufficient to consider the number of choices for such set tuples.

By Corollary~\ref{cor:numlinkset} and (\ref{eq:Tbound}), the number of choices for $T$ is at most 
\[
|Y|\cdot\sum_{\ell \leq t_{\text{bound}}}\exp(8\ell\ln(\qq s))\leq |Y|\exp(16t_{\text{bound}}\ln(\qq s)).
\]
For a fixed set $T$, the number of choices for $T_0$ and $T'_0$ are both at most $2^{t_{\text{bound}}}$, and then $T_1$ is uniquely determined. Moreover, by~(\ref{eq:Obound}), for a fixed $T_0$, the number of choices for $\Omega$ is at most $\binom{3gps}{\leq 3tp}$. To summarize, we obtain that
\[
\begin{split}
|\mathcal{A}_1|
&\leq |Y|\exp(16t_{\text{bound}}\ln(\qq s))\cdot 2^{t_{\text{bound}}}\cdot 2^{t_{\text{bound}}}\binom{3gps}{\leq 3tp}
\leq |Y|\exp(18t_{\text{bound}}\ln(\qq s))\binom{3gps }{\leq 3tp}\\
&\leq |Y|\exp\left(\frac{54Cg\ln \qq\ln(\qq s)}{\varphi \qq} + \frac{54g\ln(\qq s)}{\qq^{Cm_{\varphi}/(\varphi \qq)}}+ \frac{54t\ln s\ln(\qq s)}{\qq(s - \varphi)}\right)\binom{\frac{3Cgs\ln \qq}{\varphi \qq}}{\leq \frac{3Ct\ln \qq}{\varphi \qq}}.
\end{split}
\]
\end{proof}

\begin{defi}\label{def:approx}
A \textit{$\psi$-approximation} for $A\subseteq X$ is a pair $(S, F)\in 2^X\times 2^Y$ satisfying 
\begin{equation}
F\subseteq N(A), \quad S\supseteq [A],
\end{equation}
\begin{equation}\label{def:psi1}
d_F(u) \geq \qq-\psi \quad \text{ for }\quad u\in S,
\end{equation}
and
\begin{equation}\label{def:psi2}
d_{X\setminus S}(v) \geq s-\psi \quad \text{ for }\quad v\in Y\setminus F.
\end{equation}
\end{defi}

\begin{prop}\label{lem:psiappro}
Let $A$ be a 2-linked subset of $X$ with $|[A]|=a$, $|N(A)|=g$. If $(S, F)$ is a $\psi$-approximation for $A$, then
\begin{equation}\label{eq:psiappr}
|S| \leq \frac{s}{\qq}|F|  + \frac{\psi(gs- a\qq)}{\qq}\left(\frac{1}{\qq -\psi} + \frac{1}{s -\psi}\right).
\end{equation}
\end{prop}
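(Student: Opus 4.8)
The plan is to compare the two sides of $\Sigma$ by double counting the edges between $S$ and $F$, and to distil from that comparison a single ``trade-off'' inequality that controls simultaneously how far $S$ can exceed $[A]$ and how far $F$ can fall short of $N(A)$. Throughout, set $P:=S\setminus[A]$ and $Q:=N(A)\setminus F$; since $[A]\subseteq S$ and $F\subseteq N(A)$ we have $|S|=a+|P|$ and $|F|=g-|Q|$, so the whole statement is about bounding $|P|$ and $|Q|$. Because $\Sigma$ is $(\qq,s)$-biregular, counting the edges between $S$ and $F$ from the $S$-side and from the $F$-side gives the identity $\qq|S|-e(S,Y\setminus F)=e(S,F)=s|F|-e(X\setminus S,F)$.

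\textbf{Key estimate.} The crux is the bound $e(S,Y\setminus F)\le \psi(|P|+|Q|)$. To prove it I would split $Y\setminus F=Q\,\sqcup\,(Y\setminus N(A))$. For $v\in Q\subseteq Y\setminus F$, property (iii) of the $\psi$-approximation gives $d_S(v)=s-d_{X\setminus S}(v)\le\psi$, hence $e(S,Q)\le\psi|Q|$. For the other part, note that $N([A])=N(A)$, so $[A]$ sends no edge to $Y\setminus N(A)$; therefore $e(S,Y\setminus N(A))=e(P,Y\setminus N(A))$, and for $u\in P\subseteq S$ property (ii) together with $F\subseteq N(A)$ gives $d_{Y\setminus N(A)}(u)=\qq-d_{N(A)}(u)\le\qq-d_F(u)\le\psi$, so $e(P,Y\setminus N(A))\le\psi|P|$. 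Adding the two contributions yields the claimed bound. Now drop the nonnegative term $e(X\setminus S,F)$ from the double-count identity, substitute $|S|=a+|P|$, $|F|=g-|Q|$, and use $gs=a\qq+t$; the resulting inequality $\qq|S|\le s|F|+\psi(|P|+|Q|)$ collapses after cancellation of $a\qq$ to
\[
(\qq-\psi)|P|+(s-\psi)|Q|\le t.
\]

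\textbf{Finishing.} From this trade-off one reads off $|P|\le t/(\qq-\psi)$ and $|Q|\le t/(s-\psi)$, and also $(s-\psi)|Q|\le t-(\qq-\psi)|P|$. Writing $\tfrac{s}{\qq}|Q|=\tfrac{\psi}{\qq}|Q|+\tfrac{s-\psi}{\qq}|Q|$ and feeding in the last inequality gives
\[
|P|+\frac{s}{\qq}|Q|\ \le\ \frac{\psi}{\qq}\bigl(|P|+|Q|\bigr)+\frac{t}{\qq}\ \le\ \frac{t}{\qq}+\frac{\psi t}{\qq}\Bigl(\frac{1}{\qq-\psi}+\frac{1}{s-\psi}\Bigr).
\]
Finally I would convert back: $|S|=a+|P|=\bigl(a-\tfrac{s}{\qq}|Q|\bigr)+\bigl(|P|+\tfrac{s}{\qq}|Q|\bigr)$, and since $\tfrac{s}{\qq}g=a+\tfrac{t}{\qq}$ we have $a-\tfrac{s}{\qq}|Q|=\tfrac{s}{\qq}|F|-\tfrac{t}{\qq}$; combining with the displayed bound and recalling $t=gs-a\qq$ yields exactly $|S|\le\tfrac{s}{\qq}|F|+\tfrac{\psi(gs-a\qq)}{\qq}\bigl(\tfrac{1}{\qq-\psi}+\tfrac{1}{s-\psi}\bigr)$.

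\textbf{Main obstacle.} Everything after the trade-off inequality is elementary algebra with the identity $t=gs-a\qq$; the only genuinely delicate point is the key estimate. The naive bound $e(S,Y\setminus F)\le\psi\,|N(S)\setminus F|$ is useless because $|N(S)|$ is not controlled a priori, so one must split $Y\setminus F$ relative to $N(A)$ and exploit two structural facts at once: $[A]$'s neighborhood is confined to $N(A)$ (this is where closedness of $[A]$ enters), and property (ii) forces every vertex of $P=S\setminus[A]$ to spend all but at most $\psi$ of its $\qq$ edges inside $F\subseteq N(A)$. Getting both halves of the split to land on $|P|$ and $|Q|$ rather than on an uncontrolled quantity is the heart of the argument.
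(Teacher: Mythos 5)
Your proof is correct and is essentially the same argument as the paper's: both double count the edges leaving $S$, use property (ii) for $S\setminus[A]$, property (iii) for $N(A)\setminus F$, the closedness of $[A]$, and the identity $e(N(A),X\setminus[A])=gs-a\qq=t$. Your single trade-off inequality $(\qq-\psi)|P|+(s-\psi)|Q|\le t$ is just a rearranged form of the paper's two displayed estimates, so the final algebra differs only cosmetically.
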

\begin{proof}
First, by (\ref{def:psi1}) and (\ref{def:psi2}) we have
\[
\qq|S|  - \psi |S\setminus [A]|=
\qq|[A]|  + (\qq - \psi) |S\setminus [A]|\leq 
e(S, N(A)) \leq s|F| +\psi|N(A)\setminus F|,
\]
which gives
\begin{equation}\label{eq:Sbound}
|S| \leq \frac{s}{\qq}|F| +  \frac{\psi}{\qq} \Big(|N(A)\setminus F| + |S\setminus [A]|\Big).
\end{equation}
Note that each $v\in N(A)\setminus F$ contributes at least $s-\psi$ edges to $E(N(A), X\setminus [A])$, while each $v\in S\setminus [A]$ contributes at least $\qq -\psi$ edges to $E(N(A), X\setminus [A])$. This implies that
\[
|N(A)\setminus F| + |S\setminus [A]| 
\leq e(N(A), X\setminus [A])\left(\frac{1}{\qq -\psi} + \frac{1}{s -\psi}\right)
= (gs- a\qq)\left(\frac{1}{\qq -\psi} + \frac{1}{s -\psi}\right),
\]
which together with~(\ref{eq:Sbound}) completes the proof.
\end{proof}

\begin{lemma}\label{lem:con2}
Let $\Sigma$, $\mathcal{G}$, $\mathcal{A}_1$ and $t$ be as in Lemma~\ref{lem:con1}.
For $1\leq \varphi \leq s-1$ and a set $F'\in \mathcal{A}_1$, let
\[
\mathcal{G}'=\mathcal{G}'(F'):=\{A\in \mathcal{G}: \ F' \text{ is a $\varphi$-approximation of $A$}\}.
\]
Then for every $1 \leq \psi \leq \min\{\qq, s\} -1$ there exists a family $\mathcal{A}_2\subseteq 2^{X}\times 2^{Y}$ of size
\begin{equation}
|\mathcal{A}_2|\leq \binom{gs}{\leq t/((s-\varphi)\psi)} \binom{gs\qq}{\leq t/((\qq - \psi)\psi)},
\end{equation}
and a map $\pi_2: \mathcal{G}' \rightarrow \mathcal{A}_2$ for which
$\pi_2(A):=(S, F)$ is a $\psi$-approximation of $A$.
\end{lemma}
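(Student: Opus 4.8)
The plan is to carry out the ``contraction'' step of Sapozhenko's container scheme for the biregular graph $\Sigma$: starting from the $\varphi$-approximation $F'$ produced by Lemma~\ref{lem:con1}, we refine it to a $\psi$-approximation $(S,F)$ that can be reconstructed from $F'$ together with a bounded amount of extra data, the two pieces of which are responsible for the two binomial factors $\binom{gs}{\le t/((s-\varphi)\psi)}$ and $\binom{gsq}{\le t/((q-\psi)\psi)}$ in the bound on $|\mathcal{A}_2|$.

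First I would record the quantitative input that makes the refinement cheap. Since $N(A)^{\varphi}\subseteq F'\subseteq N(A)=N([A])$, every $v\in N(A)\setminus F'$ has at most $\varphi$ neighbours in $[A]$, hence at least $s-\varphi$ neighbours in $X\setminus[A]$; as all $aq$ edges leaving $[A]$ land in $N(A)$, we have $e(N(A),X\setminus[A])=gs-aq=t$, and therefore $|N(A)\setminus F'|\le t/(s-\varphi)$. This is where the factor $s-\varphi$ enters; the extra $1/\psi$ and the edge-universe of size $gs$ will come from a covering step.

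For the construction of $(S,F)$ I would follow Sapozhenko's refinement in its biregular form. Obtain $F$ by enlarging $F'$ inside $N(A)$, i.e.\ $F:=F'\cup R_1$ with $R_1\subseteq N(A)\setminus F'$ chosen so that every $u\in[A]$ has at most $\psi$ neighbours outside $F$; then set $S:=\{u\in X:\ d_F(u)\ge q-\psi\}$, which by construction satisfies \eqref{def:psi1} and contains $[A]$. A covering/averaging argument (cf.\ Lemma~\ref{lem:cover}) applied to the bipartite graph between $[A]$ and $N(A)\setminus F'$ — whose $[A]$-side degrees exceed $\psi$ and whose $(N(A)\setminus F')$-side degrees are at most $\varphi$ — shows that such an $R_1$ of size essentially $t/((s-\varphi)\psi)$ exists, so the number of choices for $R_1$ is at most $\binom{gs}{\le t/((s-\varphi)\psi)}$. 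A symmetric second step adjusts $F$ (keeping $F\subseteq N(A)$) and $S$ so that \eqref{def:psi2} also holds: here the obstruction is governed by how far $S$ exceeds $[A]$, and since each $u\in S\setminus[A]$ sends at least $q-\psi$ of its $q$ edges into $N(A)$ while $e(X\setminus[A],N(A))=t$, one gets $|S\setminus[A]|\le t/(q-\psi)$ and, after another covering step, a correction set whose number of choices is at most $\binom{gsq}{\le t/((q-\psi)\psi)}$. The stabilised pair $(S,F)$ is then a deterministic function of $F'$ and the two correction sets, so setting $\pi_2(A):=(S,F)$ gives a well-defined map into a family $\mathcal{A}_2$ of the stated size, and one checks the requirements of Definition~\ref{def:approx} directly from the construction.

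The main obstacle I anticipate is the bookkeeping on the two interacting sides: showing that the refinement stabilises after recording only the two claimed amounts of data (rather than accumulating a fresh correction at each round of an iteration, which could a priori sum to more), and pinning the denominators $(s-\varphi)\psi$ and $(q-\psi)\psi$ exactly rather than up to constants or logarithmic factors. This is exactly where the $(q,s)$-biregular case diverges in detail from Sapozhenko's regular argument, and where it is essential to work throughout with $t=gs-aq$, the number of edges from $N(A)$ into $X\setminus[A]$, in place of the vertex-count difference $g-a$.
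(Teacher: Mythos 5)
Your overall architecture (two rounds of refinement, one to enforce \eqref{def:psi1} by enlarging $F'$ inside $N(A)$ and one to enforce \eqref{def:psi2} by shrinking $S$, together with the preliminary estimates $|N(A)\setminus F'|\le t/(s-\varphi)$ and $|S\setminus[A]|\le t/(\qq-\psi)$) matches the paper's. But there is a genuine gap in what you propose to record. You want to record the correction sets themselves ($R_1\subseteq N(A)\setminus F'$ in the first round, and a subset of $S\setminus[A]$ in the second) and to bound their sizes by $t/((s-\varphi)\psi)$ and $t/((\qq-\psi)\psi)$. Neither bound holds in general: to bring every $u\in[A]$ down to at most $\psi$ neighbours outside $F$ you may have to put into $R_1$, for each offending $u$, all but $\psi$ of its up to $\qq$ neighbours in $N(A)\setminus F'$, and a covering argument in the style of Lemma~\ref{lem:cover} only guarantees one chosen neighbour per offending vertex (and costs a $\ln\varphi$ factor besides), so iterating it need not terminate within the claimed budget --- exactly the obstacle you flag at the end but do not resolve. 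There is also a universe mismatch: $R_1$ lives in $N(A)\subseteq N(N(F'))$, a set of size up to $gs\qq$, not $gs$.

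The paper's resolution is to record not the correction sets but the \emph{pivot vertices on the opposite side}: in Step 1 one repeatedly picks $v_i\in[A]$ with $d_{N(A)\setminus F_1}(v_i)>\psi$ and adds the whole of $N(v_i)$ to $F_1$; each such step deletes more than $\psi$ vertices from $N(A)\setminus F_1$, whose initial size is at most $t/(s-\varphi)$, so the set $P_1$ of pivots has size at most $t/((s-\varphi)\psi)$ and lies in $[A]\subseteq N(F')$, a universe of size at most $gs$. Symmetrically, Step 2 records pivots $u_j\in Y\setminus N(A)$ with more than $\psi$ neighbours in the current $S$ and deletes $N(u_j)$ from $S$; these pivots lie in $N(N(F_1))$, of size at most $gs\qq$, and number at most $t/((\qq-\psi)\psi)$. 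The (possibly large) corrections are then deterministic functions of $F'$, $P_1$ and $P_2$, which is what yields the two binomial factors with the exact denominators and universes. This encoding-by-pivots is the missing idea in your proposal.
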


\begin{proof}
Fix a set $A\in\mathcal{G'}$. We will construct a $\psi$-approximation $(S, F)$ for $A$ via the following two-step algorithm.

\begin{itemize}
\item[\textbf{Step 1.}] We start the algorithm with $F_1=F'$ and an empty set $P_1$. In the $i$-th iteration, we pick a
vertex $v_i\in [A]$ with $d_{N(A)\setminus F_1}(v_i)>\psi$. In case there are multiple choices, we give
preference to vertices that come earlier in some arbitrary predefined ordering of $X$. Then we update $F_1$ by $F_1\cup N(v_i)$ and $P_1$ by $P_1\cup\{v_i\}$, and move to the next iteration. We stop the process when $\{v\in[A]: d_{N(A)\setminus F_1}(v)>\psi\}=\emptyset$. Let $S_1=\{v\in X: d_{F_1}(v)\geq \qq - \psi\}$ and move to Step 2.

\item[\textbf{Step 2.}] We start with $S_2=S_1$, and an empty set $P_2$.
In the $j$-th iteration, we pick a
vertex $u_j\in Y\setminus N(A)$ with $d_{S_2}(v_i)>\psi$ (we break the ties similarly as before). Then we update $S_2$ by $S_2\setminus N(u_j)$ and $P_2$ by $P_2\cup\{u_j\}$, and move to the next iteration. We stop the process when $\{u\in Y\setminus N(A): d_{S_2}(u)>\psi\}=\emptyset$. In the end, we let $S=S_2$, $F_2=\{u\in Y: d_S(u)>\psi\}$, and $F=F_1\cup F_2$.
\end{itemize}

Next we verify that such a pair $(S, F)$ is indeed a $\psi$-approximation of $A$. Recall that $F'\subseteq N(A)$, as $F'$ is a $\varphi$-approximation of $A$. Then the procedures in Step 1 immediately shows that $F_1\subseteq N(A)$. Observe that $F_2\subseteq N(A)$, as otherwise Step 2 would not stop. Therefore, we have $F\subseteq N(A)$. Similarly, we observe that $S_1\supseteq [A]$, as otherwise Step 1 would not stop. Since in Step 2 only neighbors of $Y\setminus N(A)$ were deleted from $S_1$, we still have $S\supseteq [A]$.
Conditions (\ref{def:psi1}) and (\ref{def:psi2}) immediately follow from the definitions of $S_1, F_2$, and $S\subseteq S_1$, $F\supseteq F_1, F_2$.

Note that the output of the algorithm is completely determined by the sets $P_1$ and $P_2$. Initially. since $F'\supseteq N(A)^{\varphi}$, we have $|N(A)\setminus F'|\cdot(s - \varphi)\leq e(N(A), X\setminus [A])=t$, which gives $|N(A)\setminus F'|\leq t/(s-\varphi)$. 
Each iteration in Step 1 removes at least $\psi$ vertices from $N(A)\setminus F_1$, so there are at most $t/((s-\varphi)\psi)$ iterations. Therefore, we have
\[|P_1|\leq t/((s-\varphi)\psi),\]
and each $v\in P_1$ is an element in $[A]$ and hence $N(F')$, a set of size at most $gs$.

Similarly, for Step 2 initially we have $|S_1\setminus [A]|\leq t/(\qq - \psi)$, as each vertex in $S_1\setminus [A]$ contributes at least $\qq - \psi$ edges to $E(N(A), X\setminus [A])$.
Each iteration removes at least $\psi$ vertices from the set $S_2\setminus [A]$ and so there are at most $t/((\qq - \psi)\psi)$ iterations. Therefore, we have
\[|P_2|\leq t/((\qq - \psi)\psi),\]
and each $v\in P_2$ is an element in $N(S_1)\subseteq N(N(F_1))$, a set of size at most $gs\qq$.

Let $\mathcal{A}_2$ be the collections of all pairs $(S, F)$ which can be produced from the above algorithm by some set $A\in \mathcal{G}'$. From the above discussion, we have
\[
|\mathcal{A}_2|\leq \binom{|N(F')|}{\leq t/((s-\varphi)\psi)}\binom{|N(N(F_1))|}{t/((\qq - \psi)\psi)}
\leq \binom{gs}{\leq t/((s-\varphi)\psi)} \binom{gs\qq}{t/((\qq - \psi)\psi)}.
\]
\end{proof}

Theorem~\ref{containerlemma} immediately follows from Lemmas~\ref{lem:con1},~\ref{lem:con2}, and Proposition~\ref{lem:psiappro}.
\begin{proof}[Proof of Theorem~\ref{containerlemma}]
By Lemmas~\ref{lem:con1},~\ref{lem:con2}, for each set $A\in \G(a, g)$, there exists a set pair $(S, F)\in 2^X \times 2^Y$ such that $(S, F)$ is a $\psi$-approximation of $A$.
Let $\mathcal{W}(a, g)$ be the collection of all such $(S, F)$ pairs, and then we have 
\[
\begin{split}
|\mathcal{W}(a, g)|\leq |\A_1||\A_2|
&\leq |Y|\exp\left(\frac{54Cg\ln \qq\ln(\qq s)}{\varphi \qq} + \frac{54g\ln(\qq s)}{\qq^{Cm_{\varphi}/(\varphi \qq)}}+ \frac{54t\ln s\ln(\qq s)}{\qq(s - \varphi)}\right)\cdot\\
&\quad\binom{\frac{3Cgs\ln \qq}{\varphi \qq}}{\leq \frac{3Ct\ln \qq}{\varphi \qq}}\binom{gs}{\leq t/((s-\varphi)\psi)} \binom{gs\qq}{\leq t/((\qq - \psi)\psi)}.
\end{split}
\]
Finally, by Definition~\ref{def:approx} and Proposition~\ref{lem:psiappro}, such a $(S, F)$ pair satisfies conditions (i)--(iv).
\end{proof}

\end{document}